\numberwithin{equation}{section} \allowdisplaybreaks
\begin{document}
\newtheorem{theorem}{Theorem}[section]
\newtheorem{defin}{Definition}[section]
\newtheorem{prop}{Proposition}[section]
\newtheorem{corol}{Corollary}[section]
\newtheorem{lemma}{Lemma}[section]
\newtheorem{rem}{Remark}[section]
\newtheorem{example}{Example}[section]
\title{Dirac structures on generalized Riemannian manifolds}
\author{{\small by}\vspace{2mm}\\Izu Vaisman}
\date{}
\maketitle
{\def\thefootnote{*}\footnotetext[1]%
{{\it 2000 Mathematics Subject Classification: 53C15, 53D99} .
\newline\indent{\it Key words and phrases}: Generalized Riemannian Structures;
Dirac structures; Generalized (para)complex structures; Generalized tangent
structures.}}
\begin{center} \begin{minipage}{12cm}
A{\footnotesize BSTRACT. We characterize the Dirac structures that are parallel
with respect to Gualtieri's canonical connection of a generalized Riemannian
metric. On the other hand, we discuss Dirac structures that are images of
generalized tangent structures.
These structures turn out to be Dirac structures that, if seen
as Lie algebroids, have a symplectic structure.
Particularly, if compatibility with a generalized Riemannian metric is
required, the symplectic structure is of the K\"ahler type.}
\end{minipage}
\end{center} \vspace{5mm}
\section{Introduction}
The concept of generalized geometry is due to Hitchin
\cite{Ht1} and is of interest in the physical theory of supersymmetry
(e.g., \cite{Zab}). In generalized geometry the tangent bundle $TM$ of the
$m$-dimensional, differentiable manifold $M$ is replaced by the big tangent
bundle $\mathbf{T}M=TM\oplus T^*M$. The latter has the non degenerate, neutral
metric\footnote{In many papers on generalized geometry $g$ is defined by
$(1/2)(\alpha(Y)+\mu(X))$.}
$$ g((X,\alpha),(Y,\mu))=
\alpha(Y)+\mu(X)
$$ and the Courant bracket
$$
[(X,\alpha),(Y,\mu)]=([X,Y],L_X\mu-L_Y\alpha
+\frac{1}{2}d(\alpha(Y)-\mu(X)),$$ where $X,Y\in
\chi^1(N),\,\alpha,\mu\in\Omega^1(N)$ ($\chi^k(M)$ is
the space of $k$-vector fields and $\Omega^k(M)$ is the space of differential
$k$-forms on $M$; we will also use calligraphic characters for pairs:
$\mathcal{X}=(X,\alpha),\mathcal{Y}=(Y,\mu)$, etc.). Thus, the structure group
of $\mathbf{T}M$ is $O(m,m)$ and, by definition, the generalized geometric
structures are reductions of this structure group to various subgroups.

The almost Dirac structures, which are maximal $g$-isotropic subbundles $E$ of
$\mathbf{T}M$ and are important in mechanics and physics
\cite{C}, are generalized structures where $O(m,m)$ is
reduced to the subgroup that preserves a maximal isotropic subbundle of
$\mathds{R}^{2m}$ endowed with the standard neutral metric. The structure $E$ is
a Dirac structure if it is integrable, i.e., closed under the Courant bracket.

Hitchin's work and the subsequent thesis of Gualtieri \cite{Galt} started a
stream of research and publications on generalized complex structures. A
generalized complex structure is a $g$-skew-symmetric endomorphism
$\mathcal{J}\in End(\mathbf{T}M)$ with $\mathcal{J}^2=-Id$ and a vanishing
Courant-Nijenhuis torsion (see Section 3). Generalized complex structures may be
combined with generalized Riemannian structures, which are reductions of the
structure group of $\mathbf{T}M$ to $O(n)\times O(n)$, thus leading to
generalized K\"ahler manifolds \cite{Galt}. In \cite{G2007}, it was proven that
the generalized Riemannian metric produces a canonical connection $\nabla$ on
$\mathbf{T}M$ and the generalized K\"ahler structures are characterized by
$\nabla\mathcal{J}=0$ plus a certain torsion condition. The theory of
generalized complex structures also motivated some work on related generalized
structures: paracomplex, contact, $F$, $CRF$, Sasakian, etc.,
\cite{{PW},{IV},{V-CRF},{VgenS},{Wade}}.

In the present paper, we will discuss the relationship between a Dirac structure
and a generalized Riemannian metric. In Section 2 we give a straightforward
definition of the canonical connection of a generalized Riemannian metric and
compute its torsion. In Section 3, passing through a discussion of generalized
para-Hermitian structures, we show that, on a generalized Riemannian manifold, a
Dirac structure $E$ may be represented by a tensor field $F_E\in Iso (TM)$ and
we get the conditions for $E$ to be preserved by the canonical connection. In
Section 4, we study Dirac structures $E$ that are images of a generalized
tangent structure and show that these are characterized by the existence of a
symplectic structure on the Lie algebroid $E$ with the Courant bracket. In
Section 5, we show that, on a generalized Riemannian manifold, a symplectic form
on the Dirac structure $E$ is equivalent with a K\"ahler type form.

The paper is the initial presentation of some nice generalized, geometric
structures, which further studies will show to be of interest, hopefully.
\section{Generalized Riemannian manifolds}
A {\it generalized, Riemannian structure} on $M$ is a reduction of the structure
group of $(\mathbf{T}M,g)$ from $O(m,m)$ to $O(m)\times O(m)$, i.e., a
decomposition
\begin{equation}\label{descompG}\mathbf{T}M=V_+\oplus V_-\end{equation}
where $V_\pm$ are maximal positive, respectively negative, subbundles of $g$.
Obviously, $rank\,V_\pm=m$ and $V_+\perp_g V_-$, which shows that, in fact, the
reduction is defined by one of these subbundles.

Equivalently \cite{Galt}, the structure may be seen as a positive definite
metric $G$ together with a $G$-orthogonal decomposition (\ref{descompG}) such
that $G|_{V_\pm}=\pm g$. We may define $G$ by the endomorphism $\phi$ of
$\mathbf{T}M$ given by $\phi|_{V_\pm}=\pm Id$, equivalently, by
\begin{equation}\label{eqGg}  G((X,\alpha),(Y,\mu)) =
g(\phi(X,\alpha),(Y,\mu)).
\end{equation}
The endomorphisms $\phi$ that produce generalized Riemannian metrics are
characterized by the conditions
\begin{equation}\label{Riemannbig2}
\phi^2=Id,\;\;g(\phi(X,\alpha),\phi(Y,\mu)) =
g((X,\alpha),(Y,\mu))
\end{equation} and the requirement that $G$ given by (\ref{eqGg})
is positive definite (the second condition (\ref{Riemannbig2}) comes from the
symmetry of $G$ and ensures that the $\pm1$-eigebundles $V_\pm$ of $\phi$ are
$G$-orthogonal).

In \cite{Galt}, it was shown that $G$ is equivalent with a pair $(\gamma,\psi)$,
where $\gamma$ is a usual Riemannian metric on $M$ and $\psi\in\Omega^2(M)$.
This equivalence is realized by putting
\begin{equation}\label{exprEpm}
V_{\pm}=\{(X,\flat_{\psi\pm\gamma}X)\,/\,X\in TM\}.\end{equation} Formula
(\ref{exprEpm}) also shows the existence of isomorphisms
\begin{equation}\label{isotau}\tau_\pm:V_\pm\rightarrow TM,\hspace{3mm}
\tau_\pm(X,\flat_{\psi\pm\gamma}X)=X,\end{equation}
which may be used to transfer structures between $V_\pm$ and $TM$. In
particular, the two metrics $G|_{V_\pm}$ transfer to $\gamma$.

On $\mathbf{T}M$, it is natural to consider connections $\nabla$ that are
compatible with the neutral metric $g$, i.e., such that
\begin{equation}\label{conexg}
X(g(\mathcal{Y},\mathcal{Z}))= g(\nabla_X \mathcal{Y},\mathcal{Z})
+g(\mathcal{Y},\nabla_X
\mathcal{Z}),\hspace{2mm}\forall{X}\in TM,\mathcal{Y},\mathcal{Z}
\in\Gamma \mathbf{T}M;\end{equation} we call
them {\it big connections}.

Furthermore, on a generalized Riemannian manifold $(M,G)$, a connection $\nabla$
on $\mathbf{T}M$ is a $G$-{\it metric} connection if it is compatible with $G$,
i.e., (\ref{conexg}) with $g$ replaced by $G$ holds. If $\nabla$ is a big
connection, condition (\ref{conexg}) for $G$ is equivalent with
$$
\nabla_\mathcal{X}(\phi
\mathcal{Y})-\phi(\nabla_\mathcal{X}\mathcal{Y})=0,$$
which, furthermore, is equivalent with the commutation of $\nabla$ with the two
projections $(1/2)(Id\pm\phi)$. Hence, $\nabla$ is $G$-metric iff it preserves
the subbundles $V_\pm$. By using the transport to $TM$ via $\tau_\pm$, we see
that there exists a bijective correspondence between $G$-metric big connections
$\nabla$ and pairs $D^\pm$ of $\gamma$-metric connections on $M$, which is
realized by
$$ \nabla_X(Y,\flat_{\psi\pm\gamma}Y)
=(D^\pm_XY,\flat_{\psi\pm\gamma}D^\pm_XY).
$$

For any Riemannian metric $\gamma$, there exists a unique $\gamma$-metric
connection with a prescribed torsion. Particularly, a generalized Riemannian
metric $G\Leftrightarrow(\gamma,\psi)$ produces two connections on $TM$, which
are the $\gamma$-metric connections $D^\pm$ with the torsion defined by
\begin{equation}\label{Tdpsi} \gamma(T^{D^\pm}(X,Y),Z)=\pm d\psi(X,Y,Z).
\end{equation}
The connections $D^\pm$ are given by
\begin{equation}\label{DpmcuLC}D^\pm_XY=D_XY\pm
\frac{1}{2}\sharp_\gamma[i(Y)i(X)d\psi],
\end{equation} where $D$ is the Levi-Civita connection of $\gamma$.

The $G$-metric big connection $\nabla$ defined by the connections
(\ref{DpmcuLC}) is called the {\it canonical big connection} of $G$; one can see
that $\nabla$ coincides with the connection defined by Gualtieri
\cite{G2007} and Ellwood \cite{E}.
If we define the {\it Courant torsion}
$$T^{\nabla}(\mathcal{X},\mathcal{Y})=
\nabla_{X}\mathcal{Y}-
\nabla_{Y}\mathcal{X}-
[\mathcal{X},\mathcal{Y}],$$ we get an object that is not
$C^\infty(M)$-bilinear. This is corrected in the {\it Gualtieri torsion}
\cite{G2007}
\begin{equation}\label{Gualttord}
\mathcal{T}^{\nabla}(\mathcal{X},\mathcal{Y},\mathcal{Z})=
g(T^{\nabla}(\mathcal{X},\mathcal{Y}),\mathcal{Z}) +\frac{1}{2}[g(\nabla_{
\mathcal{Z}}\mathcal{X},\mathcal{Y}) -g(\nabla_{
\mathcal{Z}}\mathcal{Y},\mathcal{X})].\end{equation}
The tensorial character and, also, the total skew symmetry of
$\mathcal{T}^\nabla$ follow from the properties of the Courant bracket
\cite{LWX}.

We compute the Gualtieri torsion of the canonical big connection; the results
will agree with those of
\cite{G2007}). For any $X,Y\in\chi^1(M)$, computations give
\begin{equation}\label{Cptpsi} [(X,\flat_\psi X),(Y,\flat_\psi Y)]
=([X,Y],\flat_\psi[X,Y]+i(Y)i(X)d\psi),\end{equation}
\begin{equation}\label{auxgeneral} [(X,\flat_{\psi\pm\gamma}X),
(Y,\flat_{\psi\pm\gamma}Y)] = ([X,Y],\flat_{\psi\pm\gamma}[X,Y]
\end{equation} $$+i(Y)i(X)d\psi
\pm (L_Xi(Y)\gamma-i(X)L_Y\gamma)),$$
\begin{equation}\label{CptPpm} [(X,\flat_{\psi+\gamma}X),(Y,\flat_{\psi
-\gamma}Y)]=([X,Y],\flat_\psi[X,Y])+(0,i(Y)i(X)d\psi\end{equation}
$$-L_X(\flat_\gamma Y)-L_Y(\flat_\gamma X)+d(\gamma(X,Y)).$$

Now, insert (\ref{CptPpm}) in the expression of the {\it mixed Courant torsion}
$$T^\nabla((X,\flat_{\psi+\gamma}X),(Y,\flat_{\psi-\gamma}Y))=
(D^-_XY,\flat_{\psi-\gamma}D^-_XY)- (D^+_YX,\flat_{\psi+\gamma}D^+_YX)$$
$$-[(X,\flat_{\psi+\gamma}X),(Y,\flat_{\psi -\gamma}Y)]$$ where
$D^\pm$ are given by (\ref{DpmcuLC}). After some technical calculations, we
shall obtain
$$T^\nabla((X,\flat_{\psi+\gamma}X),(Y,\flat_{\psi-\gamma}Y))=0.$$
As a matter of fact, the previous equality is equivalent to (\ref{Tdpsi}). The
annulation of the mixed Courant torsion implies
$$
\mathcal{T}^\nabla(\mathcal{X}_+,\mathcal{Y}_+,\mathcal{Z}_-)=0,\;
\mathcal{T}^\nabla(\mathcal{X}_-,\mathcal{Y}_-,\mathcal{Z}_+)=0
\hspace{3mm} \forall \mathcal{X}_\pm,\mathcal{Y}_\pm,
\mathcal{Z}_\pm\in V_\pm.$$

Furthermore, we have
\begin{equation}\label{Ctorscan} pr_{TM}T^\nabla((X,\flat_{\psi\pm\gamma}X),
(Y,\flat_{\psi\pm\gamma}Y)) =T^{D^\pm}(X,Y)\end{equation} and then, with
(\ref{auxgeneral}),
\begin{equation}\label{Ctorscan1}
pr_{T^*M}T^\nabla((X,\flat_{\psi\pm\gamma}X),(Y,\flat_{\psi\pm\gamma}Y))
=\flat_{\psi\pm\gamma}T^{D^\pm}(X,Y)\end{equation}
$$-[i(Y)i(X)d\psi
\pm (L_Xi(Y)\gamma-i(X)L_Y\gamma)].$$
From (\ref{Ctorscan}), (\ref{Ctorscan1}) and (\ref{Tdpsi}), we get
$$ T^\nabla((X,\flat_{\psi\pm\gamma}X),
(Y,\flat_{\psi\pm\gamma}Y)) =(\pm
i(Y)i(X)d\psi,\pm\flat_{\psi\pm\gamma}i(Y)i(X)d\psi
$$
$$-[i(Y)i(X)d\psi\pm (L_Xi(Y)\gamma-i(X)L_Y\gamma)]).$$
If this expression is inserted in (\ref{Gualttord}) and the required technical
computations are performed, the result is
\begin{equation}\label{Gtorsnabla2}
\mathcal{T}^\nabla((X,\flat_{\psi\pm\gamma} X),
(Y,\flat_{\psi\pm\gamma} Y), (Z,\flat_{\psi\pm\gamma} Z))=
2d\psi(X,Y,Z).\end{equation}
\begin{rem}\label{obscurb} {\rm It is easy to see that
the curvature of the canonical big connection is equivalent with the pair of
curvature tensors $R^{D^\pm}$ of the connections $D^\pm$. }\end{rem}

An alternative notion, which we will not use in this paper, is that of a
generalized connection
\cite{G2007}. Assume that the pair $(A,A^*)$ is a Lie bialgebroid
\cite{MX} and $V$ is a vector bundle on $M$. Consider a pair
$(\nabla,\nabla^*)$ where $\nabla$, $\nabla^*$ are an $A$-connection,
respectively an $A^*$-connection on $V$. The operator
$$ \mathcal{D}_{(a,a^*)}v
=(\nabla_av+\nabla^*_{a^*}v)$$ where $a\in\Gamma A, a^*\in\Gamma
A^*$ is called an $(A,A^*)$-{\it generalized connection} or {\it covariant
derivative}. $\mathcal{D}$ is $\mathds{R}$-bilinear and has the properties
$$
\begin{array}{c}\mathcal{D}_{(fa,fa^*)}v
=f\mathcal{D}_{(a,a^*)}v,\vspace{2mm}\\
\mathcal{D}_{(a,a^*)}(fv) =f\mathcal{D}_{(a,a^*)}v
+(\sharp_Aa+\sharp_{A^*}a^*)(f)v\end{array}$$ where
$\sharp_A,\sharp_{A^*}$ are the anchors of $A,A^*$. $\mathcal{D}$ is said to
preserve $g\in\Gamma\otimes^2V^*$ if
$$ (\sharp_Aa+\sharp_{A^*}a^*)g(v_1,v_2)
=g(\mathcal{D}_{(a,a^*)}v_1,v_2)+g(v_1,\mathcal{D}_{(a,a^*)}v_2),$$
which means that both $\nabla$ and $\nabla^*$ preserve $g$.

In the particular case $A=TM$ with the Lie bracket, $A^*=T^*M$ with zero anchor
and zero bracket we simply speak of a generalized connection and $\nabla^*$ is a
tensor. Furthermore, if  $V=\mathbf{T}M$ and if we are interested in generalized
connections that preserve $g$ and $G$, $\nabla$ must be a $G$-metric big
connection and the tensor $\nabla^*$ must satisfy the conditions
$$
g(\nabla^*_\alpha\mathcal{X},\mathcal{Y})+
g(\mathcal{X},\nabla^*_\alpha\mathcal{Y})=0,\;
G(\nabla^*_\alpha\mathcal{X},\mathcal{Y})+
G(\mathcal{X},\nabla^*_\alpha\mathcal{Y})=0.
$$ It follows that $\nabla^*$
commutes with $\phi$ and preserves the subbundles $V_\pm$, therefore, $\nabla^*$
is equivalent with a pair $\Lambda^\pm$ of $\gamma$-skew-symmetric tensor fields of the type $(2,1)$ such that
$$ \nabla^*_\alpha(Y,\flat_{\psi\pm\gamma}Y)
=(\Lambda^\pm_\alpha Y,\flat_{\psi\pm\gamma}\Lambda^\pm_\alpha Y).
$$ If we denote
$$\Xi^\pm(X,Y,Z)=\gamma(\Lambda^\pm_{\flat_{\psi\pm\gamma}Z}X,Y),$$
the skew-symmetry condition becomes
$$\Xi^\pm(X,Y,Z)+\Xi^\pm(Y,X,Z)=0.$$ (Notice that the non-degeneracy of $\gamma$
implies the non-degeneracy of $\psi\pm\gamma$.)

For instance, we may take $\nabla=\nabla^{LC}$ to be the {\it big Levi-Civita
connection} defined by taking both $D^\pm$ equal to the Levi-Civita connection
$D$ and $\Xi^\pm=\pm d\psi$. The corresponding $\mathcal{D}^{LC}$ is the {\it
generalized Levi-Civita connection}. It codifies the same data like the
canonical big connection, but in a different way.

For generalized connections, the Courant torsion and the (totally skew
symmetric) Gualtieri torsion may be defined like for big connections, using the
operator $\mathcal{D}$ instead of $\nabla$, and we have
$$
T^\mathcal{D}((X,\alpha),(Y,\mu))= T^\nabla((X,\alpha),(Y,\mu))
+\nabla_\alpha^*(Y,\mu)-\nabla_\mu^*(X,\alpha).$$ Then, after some
calculations we get
$$
\begin{array}{l} \mathcal{T}^{ \mathcal{D}^{LC}}( \mathcal{X}_\pm,
\mathcal{Y}_\pm,\mathcal{Z}_\pm)= \mathcal{T}^{ \nabla^{LC}}( \mathcal{X}_\pm,
\mathcal{Y}_\pm,\mathcal{Z}_\pm)\pm 3d\psi(X,Y,Z),\vspace*{2mm}\\
\mathcal{T}^{ \mathcal{D}^{LC}}( \mathcal{X}_\pm,
\mathcal{Y}_pm,\mathcal{Z}_\mp)= \mathcal{T}^{ \nabla^{LC}}( \mathcal{X}_\pm,
\mathcal{Y}_\pm,\mathcal{Z}_mp)\vspace*{2mm}\\ \hspace*{3.3cm}\pm
d\psi(X,Y,\sharp_{\psi\pm\gamma}\flat_{\psi\mp\gamma}Z),\end{array}$$
$\forall\mathcal{X}_\pm,\mathcal{Y}_\pm,\mathcal{Z}_\pm\in V_\pm$. Thus, to end
the computation of the Gualtieri torsion of the generalized Levi-Civita
connection, we have to compute $ \mathcal{T}^{\nabla^{LC}}$ and technical
calculations that use the bracket formulas (\ref{auxgeneral}), (\ref{CptPpm})
yield
$$\mathcal{T}^{\nabla^{LC}}(
\mathcal{X},\mathcal{Y},\mathcal{Z})=-
d\psi(pr_{TM}\mathcal{X},pr_{TM}\mathcal{Y},pr_{TM}\mathcal{Z}),$$
$\forall\mathcal{X},\mathcal{Y},\mathcal{Z}\in\Gamma\mathbf{T}(M)$.

We may also define a curvature tensor. The correction that led to the Gualtieri
torsion may be seen as the use of a {\it modified Courant bracket}
$[\mathcal{X},\mathcal{Y}]^\mathcal{D}$ defined by
$$
g([\mathcal{X},\mathcal{Y}]^\mathcal{D},\mathcal{Z})=
g([\mathcal{X},\mathcal{Y}],\mathcal{Z})-
\frac{1}{2}g( \mathcal{D}_\mathcal{Z} \mathcal{X},\mathcal{Y})
+\frac{1}{2}g( \mathcal{D}_\mathcal{Z} \mathcal{Y},\mathcal{Z})
$$ (then,
$[\mathcal{X}_+,\mathcal{Y}_-]^\mathcal{D} =[\mathcal{X}_+,\mathcal{Y}_-]$ for
all $ \mathcal{X}_+\in\Gamma V_+,
\mathcal{Y}_-\in\Gamma V_-$). The formula
$$ \mathcal{R}^\mathcal{D}(
\mathcal{X},\mathcal{Y}) \mathcal{Z}=\mathcal{D}_\mathcal{X}
\mathcal{D}_\mathcal{Y} \mathcal{Z}-\mathcal{D}_\mathcal{Y}
\mathcal{D}_\mathcal{X}
\mathcal{Z}-\mathcal{D}_{[\mathcal{X},\mathcal{Y}]^\mathcal{D}} \mathcal{Z}
$$ defines a tensor that may be called the {\it generalized curvature tensor}.
\begin{rem}\label{obstwist}
{\rm If $M$ is endowed with a twisted Courant bracket
$$
[(X,\alpha),(Y,\mu)]^\Theta=([X,Y],L_X\mu-L_Y\alpha
+\frac{1}{2}d(\alpha(Y)-\mu(X)-i(Y)i(X)\Theta))$$ where $\Theta$ is
a closed $3$-form \cite{SW}, we may define a {\it twisted canonical big
connection} and a {\it twisted generalized Levi-Civita connection} in the same
way but replacing the form $d\psi$ with $d\psi+\Theta$. Then, we get a {\it
twisted Courant and Gualtieri torsion} $T^\mathcal{D}_\Theta(
\mathcal{X},\mathcal{Y})$, $\mathcal{T}^\mathcal{D}_\Theta(
\mathcal{X},\mathcal{Y},\mathcal{Z})$ replacing the Courant
bracket by the twisted Courant bracket.}\end{rem}
\section{Parallel Dirac structures}
Before referring to a single Dirac structure, we look at pairs of transversal
structures. Consider an endomorphism $\Psi\in End(\mathbf{T}M)$ such that
\begin{equation}\label{skewsym}  \Psi^2=\epsilon
Id,\;\;g(\mathcal{X},\Psi\mathcal{Y}) +
g(\Psi\mathcal{X},\mathcal{Y})=0,\hspace{2mm}
\epsilon=\pm1.\end{equation}
Then, the expression
\begin{equation}\label{NijPsi}
\mathcal{N}_\Psi(\mathcal{X},\mathcal{Y}) = [\Psi\mathcal{X},
\Psi\mathcal{Y}] -\Psi[\mathcal{X},\Psi\mathcal{Y}] -
\Psi[\Psi\mathcal{X},\mathcal{Y}]
+\Psi^2[\mathcal{X},\mathcal{Y}]=0
\end{equation}
has a tensorial character and it is called the {\it Courant-Nijenhuis torsion}
of $\Psi$. If $\epsilon=-1$, $\Psi$ is a generalized, almost complex structure $
\mathcal{J}$. If $\epsilon=1$, $\Psi$ is a {\it generalized, almost paracomplex
structure}. In both cases, if $
\mathcal{N}_\Psi=0$, the structure is {\it integrable} and the term
``almost" is omitted.

We refer to \cite{{Galt},{IV}} for the basics. In the complex case $\Psi$ may be
identified with the pair $E,\bar{E}$ of complex conjugate, transversal, almost
Dirac structures defined by its $\pm
\sqrt{-1}$-eigenbundles. In the paracomplex case $\Psi$ may be
identified with the pair $E,E'$ of real, transversal, almost Dirac structures
defined by its $\pm 1$-eigenbundles. In both cases, integrability is equivalent
with the property that the eigenbundles are integrable, i.e., closed under
Courant brackets. $\Psi$ has a representation by classical tensor fields:
\begin{equation}\label{matriceaPsi} \Psi\left(
\begin{array}{c}X\vspace{2mm}\\ \alpha \end{array}
\right) = \left(\begin{array}{cc} A&\sharp_\pi\vspace{2mm}\\
\flat_\sigma&-^t\hspace{-1pt}A\end{array}\right)
\left( \begin{array}{c}X\vspace{2mm}\\
\alpha \end{array}\right),\end{equation}
where $A\in End (TM),\pi\in\chi^2(M),\sigma\in\Omega^2(M)$, $t$ denotes
transposition and
$$ A^2=\epsilon Id -
\sharp_\pi\circ\flat_\sigma,\;\pi(\alpha\circ A,\beta)=\pi(\alpha,
\beta\circ A),\;\sigma(AX,Y)=\sigma(X,AY).$$ The expression of the integrability condition in terms of $(A,\pi,\sigma)$ is known and it includes the fact that $\pi$ is a Poisson bivector field.

We are interested in structures $\Psi$ on a generalized Riemannian manifold
$(M,G)$. Then, $\Psi$ is {\it compatible} with $G$ if
\begin{equation}\label{compatGPhi0} G(\Psi\mathcal{X},\Psi\mathcal{Y})
= G(\mathcal{X},\mathcal{Y}), \end{equation} equivalently,
\begin{equation}\label{compatGPhi}
\phi\circ\Psi=-\epsilon\Psi\circ\phi, \end{equation}
where $\phi$ is defined by (\ref{eqGg}). A compatible pair $(G,\Psi)$ with
$\epsilon=-1$, respectively $\epsilon=1$, is an {\it almost generalized
Hermitian}, respectively {\it para-Hermitian}, structure and ``almost" is
omitted in the integrable case. In the Hermitian case, condition
(\ref{compatGPhi0}) is equivalent with the fact that the complex subbundles
$E,\bar{E}$ are $G$-isotropic. In the para-Hermitian case, condition
(\ref{compatGPhi0}) is equivalent with the fact that the eigenbundles
$E,E'$ are $G$-orthogonal.

In the Hermitian case, (\ref{compatGPhi}) shows that $\mathcal{J}=\Psi$ preserves the
eigenbundles $V_\pm$, hence, it corresponds bijectively with a pair of
$\gamma$-compatible, almost complex structures $J_\pm$ of $M$ obtained by the
transfer of $\mathcal{J}|_{V_\pm}$ to $TM$ via the isomorphisms $\tau_\pm$ of (\ref{isotau}). In other
words, $\mathcal{J}$ is expressed by
$$
\mathcal{J}(X,\flat_{\psi\pm\gamma}X)=(J_\pm X,\flat_{\psi\pm\gamma}J_\pm
X).$$

In the para-Hermitian case, (\ref{compatGPhi}) shows that $\Psi$ interchanges
the eigenbundles $V_\pm$ and $\Psi$ bijectively corresponds to a bundle
isomorphism $F$ of $TM$ such that
\begin{equation}\label{defF}
\Psi(X,\flat_{\psi+\gamma}X)=(FX,\flat_{\psi-\gamma}FX)\end{equation}
and $F$ satisfies the condition
\begin{equation}\label{gammaF}
\gamma(FX,FY)=\gamma(X,Y).\end{equation}
By replacing $X$ with $F^{-1}X$, we get
$$
\Psi(X,\flat_{\psi-\gamma}X)=(F^{-1}X,\flat_{\psi+\gamma}F^{-1}X),
\hspace{3mm}\gamma(F^{-1}X,F^{-1}Y)=\gamma(X,Y).
$$

If we express $\Psi$ of (\ref{defF}) by (\ref{matriceaPsi}), we get
$$ \begin{array}{ll}
F=A+\sharp_\pi\circ\flat_{\psi+\gamma},&
\flat_{\psi-\gamma}\circ F=\flat_\sigma-
\hspace{1pt}^t\hspace{-1pt}A\circ\flat_{\psi+\gamma},\vspace*{2mm}\\
F^{-1}=A+\sharp_\pi\circ\flat_{\psi-\gamma},&
\flat_{\psi+\gamma}\circ F^{-1}=\flat_\sigma-
\hspace{1pt}^t\hspace{-1pt}A\circ\flat_{\psi-\gamma}.\end{array}
$$
Then, by addition and subtraction:
\begin{equation}\label{FA} \begin{array}{c}
\sharp_\pi=\frac{1}{2}(F-F^{-1})\circ\sharp_\gamma,\;
A=\frac{1}{2}(F+F^{-1})-\sharp_\pi\flat_\psi,\vspace*{2mm}\\
\flat_\sigma=\flat_\psi\circ(F+F^{-1})-\hspace{1pt}^t\hspace{-1pt}A\circ\flat_\psi.
\end{array}\end{equation}

Furthermore, since the projections $(1/2)(Id\pm\Psi)$ restrict to isomorphisms
$V_+\rightarrow E,V_-\rightarrow E'$, we have
\begin{equation}\label{E} \begin{array}{l}
E=\{(X,\flat_{\psi+\gamma}X)+(FX,\flat_{\psi-\gamma}FX)\},\vspace{2mm}\\
E'=\{(X,\flat_{\psi+\gamma}X)-(FX,\flat_{\psi-\gamma}FX)\},\end{array}
\end{equation}  where the
representation of the elements of $E,E'$ is unique.

Now, we shall address the question of integrability and we start with the
following result.
\begin{prop}\label{existccompat} On any (para-)Hermitian manifold
$(M,\Psi)$ there are compatible, $G$-metric, big connections.
\end{prop}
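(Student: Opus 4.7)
The plan is to use the bijection established in Section~2 between $G$-metric big connections $\nabla$ on $\mathbf{T}M$ and pairs $(D^+,D^-)$ of $\gamma$-metric linear connections on $TM$, given by
$$\nabla_X(Y,\flat_{\psi\pm\gamma}Y)=(D^\pm_XY,\flat_{\psi\pm\gamma}D^\pm_XY),$$
and then to translate the condition $\nabla\Psi=0$ into conditions on $D^\pm$ that admit known solutions.

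First I would separate the two cases according to the sign of $\epsilon$. In the Hermitian case ($\epsilon=-1$), $\mathcal{J}=\Psi$ preserves $V_\pm$ and is encoded by a pair $(J_+,J_-)$ of $\gamma$-compatible almost complex structures on $M$. A direct substitution shows that $\nabla\mathcal{J}=0$ is equivalent to $D^\pm J_\pm=0$, i.e., each $D^\pm$ is a Hermitian connection of $(M,\gamma,J_\pm)$. Such connections always exist: for instance, one may set
$$D^\pm_XY:=\tfrac12\bigl(D_XY-J_\pm D_X(J_\pm Y)\bigr),$$
where $D$ is the Levi-Civita connection of $\gamma$, and verify in a few lines, using $D\gamma=0$ and $\gamma(J_\pm\cdot,J_\pm\cdot)=\gamma$, that $D^\pm$ is $\gamma$-metric and $D^\pm J_\pm=0$.

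In the para-Hermitian case ($\epsilon=+1$), $\Psi$ interchanges $V_\pm$ and is encoded by the bundle isomorphism $F\in\mathrm{Iso}(TM)$ of formula~(\ref{defF}), which by (\ref{gammaF}) is a $\gamma$-isometry. A direct computation shows that $\nabla\Psi=0$ on elements of $V_+$ reads
$$D^-_X(FY)=F(D^+_XY)\quad\forall X,Y\in\chi^1(M),$$
with the analogous (automatically equivalent) identity on $V_-$. Hence it suffices to pick any $\gamma$-metric connection $D^+$ (for instance, the Levi-Civita connection) and define $D^-$ by conjugation:
$$D^-_XZ:=F\bigl(D^+_X(F^{-1}Z)\bigr).$$
That $D^-$ is a linear connection is immediate, and it is $\gamma$-metric precisely because $F^*\gamma=\gamma$.

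There is no real obstacle; the only thing to watch is that the translation of $\nabla\Psi=0$ into an equation on $D^\pm$ uses the correct identification between $V_\pm$ and $TM$, and that in the para-Hermitian case one must transport $D^+$ to the ``other side'' by $F$ rather than trying to choose both $D^\pm$ independently. Uniqueness is not claimed and indeed fails (one can modify $D^\pm$ by any tensor fields that commute with $J_\pm$, respectively are $F$-equivariant, and are $\gamma$-skew), so the argument is genuinely an existence argument; non-uniqueness will be exploited in the sequel to prescribe further properties such as parallelism of a Dirac subbundle.
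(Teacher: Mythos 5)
Your proposal is correct and follows essentially the same route as the paper: reduce $\nabla\circ\Psi=\Psi\circ\nabla$ via the bijection $\nabla\leftrightarrow(D^+,D^-)$ to the conditions $D^\pm\circ J_\pm=J_\pm\circ D^\pm$ in the Hermitian case and $D^-\circ F=F\circ D^+$ in the para-Hermitian case, and then produce such $\gamma$-metric connections. The only difference is cosmetic: where the paper invokes connections on the unitary frame bundles and, in the para-Hermitian case, a local construction glued by a partition of unity, you give explicit global formulas (the standard Hermitian connection $\tfrac12(D_XY-J_\pm D_X(J_\pm Y))$ and the conjugate $D^-=F\circ D^+\circ F^{-1}$), both of which are easily checked to be $\gamma$-metric and compatible.
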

\begin{proof} For a big connection $\nabla$, compatibility means
$\nabla\circ\Psi=\Psi\circ\nabla$. Using the expressions of $\nabla$ and
$\Psi$ on $V_\pm$, we see that, in the Hermitian case, the compatibility
condition is equivalent with
\begin{equation}\label{compatcinH} D^\pm\circ J_\pm=J_\pm\circ
D^\pm.\end{equation} Since there exist many $\gamma$-metric connections that
satisfy (\ref{compatcinH}) (connections on the unitary principal bundles of
frames associated with $(\gamma,J_\pm)$), the required existence result holds.

In the para-Hermitian case, the compatibility condition reduces to
\begin{equation}\label{compatcinpH} D^-\circ F=F\circ D^+,
\end{equation} which implies the second required condition
$D^+\circ F^{-1}=F^{-1}\circ D^-$ because $F$ is an isomorphism. Locally, pairs
of connections satisfying  (\ref{compatcinpH}) exist (take a local basis $(e_i)$
of $TM$ and put $D^+e_i=0,D^-(Fe_i)=0$). Then, corresponding global pairs can be
constructed by the usual gluing procedure with a partition of unity.
\end{proof}
\begin{rem}\label{compatgen} {\rm Compatibility of $\Psi$ with a generalized
connection is a more complicated condition since it adds the requirement
$\nabla^*_\alpha\circ\Psi=\Psi\circ\nabla^*_\alpha$. The compatibility of the
generalized Levi-Civita connection with $\Psi$ requires the conditions
$$ d\psi(J_\pm X,Y,Z)=
-d\psi(X,J_\pm Y,Z),$$
$$
d\psi(FX,FY,Z)=-d\psi(X,Y,\sharp_{\psi+\gamma}\flat_{\psi-\gamma}Z),
$$ respectively for $\epsilon=\pm1$. The first condition holds iff $d\psi=0$
(check on arguments in the eigenspaces of $J_\pm$ and use the skew-symmetry of
$d\psi$). The second condition follows by using
$$\alpha=\flat_{\psi+\gamma}\sharp_{\psi+\gamma}\alpha,\; Z=\sharp_{\psi-\gamma}\alpha$$
and replacing $Y$ by $FY$.}\end{rem}

The next result that we need is
\begin{prop}\label{integrcuconex} If $\nabla$ is a big connection that
commutes with the integrable, generalized almost (para-)complex structure
$\Psi$, the Gualtieri torsion of $\nabla$ satisfies the condition
\begin{equation}\label{tGintegr}
\epsilon\mathcal{T}^\nabla(\mathcal{X},\mathcal{Y},\mathcal{Z})
+\mathcal{T}^\nabla(\mathcal{X},\Psi\mathcal{Y},\Psi\mathcal{Z})
+\mathcal{T}^\nabla(\Psi\mathcal{X},\mathcal{Y},\Psi\mathcal{Z})
\end{equation}
$$+\mathcal{T}^\nabla(\Psi\mathcal{X},\Psi\mathcal{Y},\mathcal{Z})=0.$$
Conversely, if there exists a big connection that commutes with $\Psi$ and
satisfies (\ref{tGintegr}), $\Psi$ is integrable.\end{prop}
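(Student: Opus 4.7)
The plan is to reduce the proposition to the single algebraic identity
$$\epsilon\mathcal{T}^\nabla(\mathcal{X},\mathcal{Y},\mathcal{Z}) + \mathcal{T}^\nabla(\mathcal{X},\Psi\mathcal{Y},\Psi\mathcal{Z}) + \mathcal{T}^\nabla(\Psi\mathcal{X},\mathcal{Y},\Psi\mathcal{Z}) + \mathcal{T}^\nabla(\Psi\mathcal{X},\Psi\mathcal{Y},\mathcal{Z}) = -g(\mathcal{N}_\Psi(\mathcal{X},\mathcal{Y}),\mathcal{Z}).$$
Once this is available, the direct implication is obvious, and the converse follows from the non-degeneracy of $g$ in the third slot: the left-hand side vanishing for all $\mathcal{Z}$ forces $\mathcal{N}_\Psi(\mathcal{X},\mathcal{Y})=0$, which is the integrability condition.

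For the first step I would rewrite each of the four Courant brackets in (\ref{NijPsi}) using $[\mathcal{A},\mathcal{B}] = \nabla_A\mathcal{B} - \nabla_B\mathcal{A} - T^\nabla(\mathcal{A},\mathcal{B})$. Passing the outer $\Psi$'s through the covariant derivatives via the hypothesis $\nabla\circ\Psi=\Psi\circ\nabla$ and using $\Psi^2=\epsilon\,Id$ in the middle brackets, all eight resulting $\nabla$-contributions should cancel pairwise, leaving the purely tensorial identity
$$\mathcal{N}_\Psi(\mathcal{X},\mathcal{Y}) = -T^\nabla(\Psi\mathcal{X},\Psi\mathcal{Y}) + \Psi T^\nabla(\mathcal{X},\Psi\mathcal{Y}) + \Psi T^\nabla(\Psi\mathcal{X},\mathcal{Y}) - \epsilon T^\nabla(\mathcal{X},\mathcal{Y}).$$

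Second, I would pair this identity with $\mathcal{Z}$ under $g$ and migrate each external $\Psi$ off $T^\nabla$ onto $\mathcal{Z}$ via the $g$-skew-symmetry (\ref{skewsym}). This reproduces precisely the four $g(T^\nabla(\cdot,\cdot),\cdot)$-parts of the Gualtieri torsions on the left of (\ref{tGintegr}), with the global minus sign demanded by the target identity. The third step is to verify that the four symmetrization corrections $\tfrac12[g(\nabla_\cdot\mathcal{X}',\mathcal{Y}') - g(\nabla_\cdot\mathcal{Y}',\mathcal{X}')]$ from (\ref{Gualttord}) sum to zero. This cancellation splits into two independent pairings: the $\epsilon\mathcal{T}^\nabla(\mathcal{X},\mathcal{Y},\mathcal{Z})$-correction combines with the $\mathcal{T}^\nabla(\Psi\mathcal{X},\Psi\mathcal{Y},\mathcal{Z})$-correction through $g(\Psi A,\Psi B)=-\epsilon\,g(A,B)$, while the two mixed corrections cancel against one another after passing $\Psi$ through $\nabla$ and then across $g$ using (\ref{skewsym}).

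The main technical point where care is needed is the first step: each of the four brackets in (\ref{NijPsi}) involves a covariant derivative in the direction of $pr_{TM}\Psi\mathcal{X}$ or $pr_{TM}\Psi\mathcal{Y}$, and the eight contributions must be written out explicitly so that the cancellation is seen to be uniform in $\epsilon=\pm 1$ and independent of the splitting between the $TM$- and $T^*M$-components. Once the algebraic identity is in hand the rest is formal, and both directions of the proposition follow at once.
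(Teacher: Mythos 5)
Your proposal is correct and is essentially the paper's own proof: the identity you derive is exactly the paper's formula (\ref{NijcutG}) relating $\mathcal{N}_\Psi$ to the Gualtieri torsion of a $\Psi$-compatible big connection (which the paper simply cites from Gualtieri as a ``straightforward calculation''), and both directions then follow from the non-degeneracy of $g$ exactly as you say. The only difference is that you actually sketch the cancellation of the eight covariant-derivative terms and the four symmetrization corrections, which the paper leaves to the reference.
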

\begin{proof}
A straightforward calculation \cite{G2007} shows that the Courant-Nijenhuis
torsion of $\Psi$ and the Gualtieri torsion of a $\Psi$-compatible big
connection $\nabla$ are related by the following formula
\begin{equation}\label{NijcutG}
g(\mathcal{N}_{\Psi}(\mathcal{X},\mathcal{Y}),\mathcal{Z})+
\epsilon\mathcal{T}^\nabla(\mathcal{X},\mathcal{Y},\mathcal{Z})
+\mathcal{T}^\nabla(\mathcal{X},\Psi\mathcal{Y},\Psi\mathcal{Z})
\end{equation}
$$+\mathcal{T}^\nabla(\Psi\mathcal{X},\mathcal{Y},\Psi\mathcal{Z})
+\mathcal{T}^\nabla(\Psi\mathcal{X},\Psi\mathcal{Y},\mathcal{Z})=0.$$
\end{proof}
\begin{rem}\label{Nijcugenc}
{\rm The conclusions of Proposition \ref{integrcuconex} hold if either we replace
$\nabla$ by a generalized connection $\mathcal{D}$ or we replace the
Nijenhuis and Courant torsions by the twisted Nijenhuis and Courant torsions
produced by the twisted Courant bracket.} \end{rem}

Proposition \ref{integrcuconex} implies the fact that a generalized K\"ahler
structure $(G,\mathcal{J})$ (see \cite{Galt} for the definition) is
characterized by the following couple of properties
\cite{G2007}: (a) the canonical big connection $\nabla$ commutes with
the generalized almost complex structure $\mathcal{J}$, (b) the Gualtieri
torsion of the canonical connection is a sum of components of $
\mathcal{J}$-type $(2,1)$ and $(1,2)$. Indeed, property (a) is equivalent with
$D^\pm J_\pm=0$ and, if the notion of  $\mathcal{J}$-type is defined like for usual complex structures,
property (b) is equivalent with (\ref{tGintegr}) and, further, with the fact that $d\psi$ is a sum of
components of $J_\pm$-type $(2,1)$ and $(1,2)$. These two properties
characterize the generalized K\"ahler structures \cite{Galt}. By Remark
\ref{Nijcugenc} and with the results of \cite{Galt}, a similar
characterization holds for twisted generalized K\"ahler structures, if the
canonical connection is defined in accordance to Remark \ref{obstwist}.

Now, let us consider the following situation
\begin{defin}\label{canparalel} {\rm A generalized para-Hermitian structure
$(G,\Psi)$ is said to be {\it parallel} if the (integrable) structure $\Psi$ commutes with the canonical big connection $\nabla$ of $G$.}\end{defin}
\begin{prop}\label{propcanpar} The generalized para-Hermitian structure
$(G,\Psi)$ is parallel iff $d\psi=0$ and the $\gamma$-isometry $F$ that defines
$\Psi$ is parallel with respect to the Levi-Civita connection $D$ of
$\gamma$.\end{prop}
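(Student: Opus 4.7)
The plan is to first translate the parallelism condition into a relation between $D^+$ and $D^-$, then extract $d\psi=0$ from it via the torsion identity of Proposition \ref{integrcuconex}, and finally observe that once $d\psi=0$ the connections $D^\pm$ both collapse to the Levi-Civita connection $D$, so the commutation reduces to $DF=0$.

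By the para-Hermitian compatibility analysis carried out in the proof of Proposition \ref{existccompat}, the canonical big connection $\nabla$ commutes with $\Psi$ if and only if $D^-\circ F=F\circ D^+$. This is the working form of the parallelism condition. The integrability of $\Psi$, which is built into the definition of a (non-almost) generalized para-Hermitian structure, allows me to apply Proposition \ref{integrcuconex} with $\epsilon=+1$, producing the identity (\ref{tGintegr}) for $\mathcal{T}^\nabla$. I then specialize that identity by choosing $\mathcal{X},\mathcal{Y},\mathcal{Z}$ all in $V_+$. Since $\Psi$ interchanges $V_+$ and $V_-$, the first summand is of pure $(+,+,+)$-type, whereas the other three each have one entry in $V_+$ and two in $V_-$. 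The vanishing of the mixed Courant torsion of the canonical connection established in Section 2, together with total skew-symmetry of $\mathcal{T}^\nabla$, forces every mixed term to vanish; by (\ref{Gtorsnabla2}), the surviving term equals $2\,d\psi(X,Y,Z)$, so $d\psi=0$.

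Once $d\psi=0$, formula (\ref{DpmcuLC}) gives $D^+=D^-=D$, and the commutation $D^-\circ F=F\circ D^+$ reads $D\circ F=F\circ D$, that is, $D_XF=0$ for every $X$. This is exactly the $D$-parallelism of the $\gamma$-isometry $F$. The converse is straightforward: $d\psi=0$ already forces $D^\pm=D$, whence $DF=0$ yields $D^-\circ F=F\circ D^+$ and so $\nabla$ commutes with $\Psi$.

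The main delicate point is the type bookkeeping in the second paragraph: the vanishing of mixed torsion is stated in Section~2 only for the signatures $(+,+,-)$ and $(-,-,+)$, but one must invoke total skew-symmetry of $\mathcal{T}^\nabla$ to extend it to all six mixed signatures, so that the four-term identity (\ref{tGintegr}) collapses to $2\,d\psi=0$. Everything else in the argument is a direct substitution of the formulas already recorded in Sections~2 and~3.
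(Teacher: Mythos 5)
Your proposal is correct and follows essentially the same route as the paper: invoke Proposition \ref{integrcuconex} for the integrable $\Psi$, evaluate (\ref{tGintegr}) on three arguments in the same $V_\pm$ where the absence of mixed torsion (extended by total skew-symmetry of $\mathcal{T}^\nabla$) kills all but the pure term $2\,d\psi(X,Y,Z)$, and then note that $d\psi=0$ gives $D^\pm=D$ so that (\ref{compatcinpH}) becomes $DF=0$. Your write-up merely makes explicit the type bookkeeping that the paper leaves implicit.
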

\begin{proof} Since the canonical big connection has no mixed torsion,
by looking at three arguments in the same subbundle $V_\pm$, we see that
(\ref{tGintegr}) is equivalent with $d\psi=0$. Then, $D^\pm=D$ and the
commutation condition (\ref{compatcinpH}) becomes $DF=0$.\end{proof}
\begin{rem}\label{obsstrpi} {\rm From (\ref{FA}), it follows
that a parallel structure has an associated, Levi-Civita parallel, Poisson
bivector field $\pi$. Hence, by a result of Lichnerowicz (e.g., see
\cite{V-carte}, Proposition 3.12) $\gamma$ is a decomposable metric with a
K\"ahlerian component.}\end{rem}
\begin{example}\label{infinitate} {\rm Let $(M,\gamma,J)$ be a K\"aher manifold. Then, $F=J$ is parallel and, for any generalized Riemannian
metric $G$ defined by $\gamma$ and by a closed $2$-form $\psi$, we get a
parallel structure $\Psi$, namely,
\begin{equation}\label{F=J} \Psi(X,\flat_{\psi\pm\gamma}X)=\pm(JX,
\flat_{\psi\mp\gamma}JX).\end{equation}
For the structure (\ref{F=J}), formulas (\ref{E}) yield $$E=graph\,
\flat_{\psi-\omega}, E'=graph\,\flat_{\psi+\omega},$$
where $\omega(X,Y)=\gamma(JX,Y)$ is the K\"ahler form of
$(\gamma,J)$.}\end{example}

Now, let us consider a single almost Dirac structure $E$ on a generalized
Riemannian manifold $(M,G)$. Obviously, $E$ may be identified with the unique,
$G$-compatible, generalized, paracomplex structure $\Psi_E$ of $+1$-eigenbundle
$E$ and $-1$-eigenbundle $E'=E^{\perp_G}=\phi(E)$ (the last equality follows
from (\ref{eqGg}) and (\ref{Riemannbig2})). We will denote by $F_E$ the isometry
of the bundle $(TM,\gamma)$ that corresponds to $\Psi_E$.

A first expression of the integrability condition of $E$ is
$$g([\mathcal{X},\mathcal{Y}],\mathcal{Z})=0,\hspace{2mm}
\mathcal{X},\mathcal{Y},\mathcal{Z}\in\Gamma E,$$ where $E$ is given by
(\ref{E}). If we denote $F_1=Id+F_E,F_2=Id-F_E$, which yields
$$\gamma(F_1X,F_2Y)=-\gamma(F_1Y,F_2X)$$ because of (\ref{gammaF}), we may write
$$ \mathcal{X}=(F_1X,\flat_\psi F_1X+\flat_\gamma F_2X),\,
\mathcal{Y}=(F_1Y,\flat_\psi F_1Y+\flat_\gamma F_2Y),\,
\mathcal{Z}=(F_1Z,\flat_\psi F_1Z+\flat_\gamma F_2Z),$$
where $X,Y,Z$ are vector fields on $M$. Then, using formula (\ref{Cptpsi}) and
making the required technical computations, the integrability condition of $E$
becomes
$$\sum_{Cycl(X,Y,Z)}\{(F_1X)\gamma(F_1Y,F_2Z)-\gamma([F_1X,F_1Y],F_2Z)\}
=d\psi(F_1X,F_1Y,F_1Z).$$

Below, we show another way to express the integrability of $E$. For any
generalized paracomplex structure $\Psi$, we may define the {\it
Courant-Ehresmann curvature} of $E$ with respect to $E'$ by
$$ \mathcal{E}_{(E;E')}(\mathcal{X},
\mathcal{Y})=(Id-\Psi)[(Id+\Psi)\mathcal{X},
(Id+\Psi)\mathcal{Y}].$$ Then, we get
$$\mathcal{E}_{(E';E)}+\mathcal{E}_{(E;E')}=\mathcal{N}_\Psi,
\;\mathcal{E}_{(E';E)}-\mathcal{E}_{(E;E')}=\Psi\mathcal{N}_\Psi,$$
therefore:
\begin{equation}\label{Ehr2} \mathcal{E}_{(E;E')}=\frac{1}{2}(Id-\Psi)\mathcal{N}_\Psi.
\end{equation}
Obviously, $E$ is integrable iff $\mathcal{E}_{(E;E')}=0$.

Our next remark is that a $G$-metric big connection preserves the almost Dirac
structure $E$ (i.e., $\nabla_X\mathcal{Y}\in\Gamma E$,
$\forall\mathcal{Y}\in\Gamma E)$ iff $\nabla$ commutes with $\Psi_L$ and by Proposition \ref{existccompat},
such connections exist for every $E$. Then, we get
\begin{prop}\label{integrLcutG} If there exists a big connection
$\nabla$ that preserves $E$ and is such that the Gualtieri torsion satisfies the
condition
\begin{equation}\label{tGLintegr}
\mathcal{T}^\nabla(\mathcal{X},\mathcal{Y},\mathcal{Z})
+\mathcal{T}^\nabla(\mathcal{X},\Psi_E\mathcal{Y},\mathcal{Z})
+\mathcal{T}^\nabla(\Psi_E\mathcal{X},\mathcal{Y},\mathcal{Z})
\end{equation}
$$+\mathcal{T}^\nabla(\Psi_E\mathcal{X},\Psi_L\mathcal{Y},\mathcal{Z})=0,
\hspace{3mm}\forall\mathcal{X},\mathcal{Y}\in\Gamma \mathbf{T}M,\mathcal{Z}
\in\Gamma E,$$ then, the almost Dirac structure $E$
is integrable Conversely, if $E$ is integrable, (\ref{tGLintegr}) holds for any big connection $\nabla$ that preserves $E$.\end{prop}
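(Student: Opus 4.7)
The plan is to deduce this proposition from the already-established Courant--Nijenhuis identity (\ref{NijcutG}) of Proposition \ref{integrcuconex}, applied to the paracomplex structure $\Psi_E$ with $\epsilon=1$, and then to exploit the fact that $E$ is \emph{maximally} $g$-isotropic. First I will observe that a big connection $\nabla$ which preserves $E$ in the relevant (i.e.\ $G$-metric) sense automatically preserves $E'=E^{\perp_G}=\phi(E)$ as well, since such a $\nabla$ commutes with $\phi$. Hence $\nabla$ commutes with $\Psi_E$ and (\ref{NijcutG}) applies.

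Next I will plug $\mathcal{Z}\in\Gamma E$ into (\ref{NijcutG}). Because $E$ is the $(+1)$-eigenbundle of $\Psi_E$, we have $\Psi_E\mathcal{Z}=\mathcal{Z}$, so the four Gualtieri-torsion terms appearing in (\ref{NijcutG}) collapse to the four terms appearing in (\ref{tGLintegr}). With $\epsilon=1$ the identity becomes
\[
g(\mathcal{N}_{\Psi_E}(\mathcal{X},\mathcal{Y}),\mathcal{Z})
=-\bigl[\text{left-hand side of }(\ref{tGLintegr})\bigr],
\qquad \forall\,\mathcal{X},\mathcal{Y}\in\Gamma\mathbf{T}M,\ \mathcal{Z}\in\Gamma E.
\]

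Now the crucial step: the maximal $g$-isotropy of $E$ gives $E^{\perp_g}=E$. Therefore the vanishing of (\ref{tGLintegr}) for every $\mathcal{Z}\in\Gamma E$ is equivalent to $\mathcal{N}_{\Psi_E}(\mathcal{X},\mathcal{Y})\in\Gamma E$ for all $\mathcal{X},\mathcal{Y}$. Using the identity $\mathcal{E}_{(E;E')}=\tfrac{1}{2}(Id-\Psi_E)\mathcal{N}_{\Psi_E}$ from (\ref{Ehr2}) together with the fact that $Id-\Psi_E$ acts as $2\,pr_{E'}$, this condition is equivalent to $\mathcal{E}_{(E;E')}=0$, which, as noted just before the proposition, is equivalent to integrability of $E$. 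This gives both the direct implication and the converse simultaneously: if $E$ is integrable, then $\mathcal{N}_{\Psi_E}$ takes values in $E$, so $g(\mathcal{N}_{\Psi_E}(\mathcal{X},\mathcal{Y}),\mathcal{Z})=0$ whenever $\mathcal{Z}\in\Gamma E$, and (\ref{tGLintegr}) follows for every $\Psi_E$-compatible big connection.

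The routine part is simply reading off the correct specialization of (\ref{NijcutG}) at $\Psi_E\mathcal{Z}=\mathcal{Z}$; the only place where one must be attentive is the interpretation of ``preserves $E$'' so that $\nabla$ really does commute with $\Psi_E$ (which is the hypothesis needed by Proposition \ref{integrcuconex}), and the use of maximal isotropy to pass from the vanishing of a $g$-pairing with all elements of $\Gamma E$ to membership in $\Gamma E$. I expect no technical obstacle beyond these bookkeeping points.
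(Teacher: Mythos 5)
Your proof is correct and follows essentially the same route as the paper: the author's (terse) proof likewise combines (\ref{Ehr2}) with the identity (\ref{NijcutG}), and then uses $\Psi_E|_E=Id$ and $E=im(Id+\Psi_E)$ — which, via the $g$-skew-symmetry of $\Psi_E$ and the maximal isotropy of $E$, is exactly your step converting $(Id-\Psi_E)\mathcal{N}_{\Psi_E}=0$ into the vanishing of $g(\mathcal{N}_{\Psi_E}(\mathcal{X},\mathcal{Y}),\mathcal{Z})$ for $\mathcal{Z}\in\Gamma E$. Your explicit remark that ``preserves $E$'' must be read for $G$-metric big connections (so that $\nabla$ also preserves $E'=\phi(E)$ and hence commutes with $\Psi_E$) is exactly the reading the paper intends in the paragraph preceding the proposition.
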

\begin{proof} Use (\ref{Ehr2}) and insert $\mathcal{N}_{\Psi}$ as given by (\ref{NijcutG}) in
the integrability condition $\mathcal{E}_{(E;E')}=0$. Then, use $E=im(Id+\Psi)$ and $\Psi|_E=Id.$
\end{proof}
\begin{defin}\label{Dircanpar} {\rm The Dirac structure $E$ is
{\it parallel} on $(M,G)$ if the canonical big connection $\nabla$ of $G$
preserves $E$.}\end{defin}
\begin{prop}\label{propDirpar} The almost Dirac structure $E$
is a parallel Dirac structure iff the following two conditions hold:
\begin{equation}\label{LCDirpar} \gamma(F_EZ,D_XF_E(Y))=
\frac{1}{2}[d\psi(X,Y,Z)+d\psi(X,F_EY,F_EZ)],\end{equation}
\begin{equation}\label{psiDirpar} d\psi(X,Y,Z)+d\psi(F_EX,F_EY,F_EZ)=0,
\end{equation}
where $D$ is the Levi-Civita connection of $\gamma$.\end{prop}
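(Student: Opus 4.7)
The plan is to split ``$E$ is a parallel Dirac structure'' into its two clauses---(i) the canonical connection $\nabla$ preserves $E$, and (ii) $E$ is Courant integrable---and to show that (i) is equivalent to (\ref{LCDirpar}) alone, while granted (i), clause (ii) is equivalent to (\ref{psiDirpar}).

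For (i) I would use the observation made just before Proposition~\ref{integrLcutG}: a $G$-metric big connection preserves $E$ iff it commutes with $\Psi_E$. Since $\Psi_E$ interchanges $V_+$ and $V_-$ through $F_E$, this commutation is exactly the para-Hermitian compatibility condition $D^-_X F_E = F_E D^+_X$ of Proposition~\ref{existccompat}. Substituting (\ref{DpmcuLC}) cancels the common Levi-Civita piece and produces $(D_X F_E)Y = \tfrac12 \sharp_\gamma[i(F_E Y)i(X) d\psi] + \tfrac12 F_E \sharp_\gamma[i(Y)i(X) d\psi]$. Pairing with $F_E Z$ via $\gamma$ and using the $\gamma$-isometry property (\ref{gammaF})---under which the second term becomes $\gamma(Z, \sharp_\gamma[i(Y)i(X) d\psi]) = d\psi(X,Y,Z)$---gives (\ref{LCDirpar}), and all manipulations are reversible.

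For (ii), once (i) is known, Proposition~\ref{integrLcutG} reduces integrability to the Gualtieri torsion identity (\ref{tGLintegr}). I would then decompose each argument along $V_+ \oplus V_-$ and apply the torsion data from Section~2: the mixed Gualtieri torsion of $\nabla$ vanishes, and the pure one equals $2 d\psi$ by (\ref{Gtorsnabla2}). Writing $\mathcal{X}_\pm = (x_\pm, \flat_{\psi\pm\gamma} x_\pm)$, similarly for $\mathcal{Y}$, and $\mathcal{Z} = (z, \flat_{\psi+\gamma}z) + (F_E z, \flat_{\psi-\gamma} F_E z)$, and using that $\Psi_E$ sends the $V_+$-slot $x_+$ to the $V_-$-slot $F_E x_+$ and the $V_-$-slot $x_-$ to the $V_+$-slot $F_E^{-1} x_-$, the left-hand side of (\ref{tGLintegr}) collapses to
\[
2 \bigl[\, d\psi(u, v, z) + d\psi(F_E u, F_E v, F_E z)\, \bigr],
\]
with $u = x_+ + F_E^{-1} x_-$ and $v = y_+ + F_E^{-1} y_-$ ranging freely over $TM$. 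Vanishing of this expression for all $u,v,z$ is precisely (\ref{psiDirpar}).

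The main obstacle I anticipate is the bookkeeping in the final reduction: one must keep track that membership in $E$ rigidly links the $V_+$ and $V_-$ parts of $\mathcal{Z}$ through $F_E$, leaving only three free slots, and verify that the four $\Psi_E$-twisted Gualtieri torsions in (\ref{tGLintegr}) split into eight pure-type $d\psi$-terms that recombine, via trilinearity, into the two $F_E$-twisted copies displayed above. Once this combinatorial simplification is in place, both equivalences follow directly.
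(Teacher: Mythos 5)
Your proposal is correct and follows essentially the same route as the paper: condition (\ref{LCDirpar}) is extracted from the commutation condition (\ref{compatcinpH}) applied to the connections (\ref{DpmcuLC}), and (\ref{psiDirpar}) is obtained by evaluating (\ref{tGLintegr}) on elements of $E$ written in the form (\ref{E}) (equivalently, replacing the arguments by their images under $Id+\Psi_E$), using the vanishing of the mixed torsion and (\ref{Gtorsnabla2}). Your write-up merely supplies the ``simple technicalities'' that the paper leaves implicit.
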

\begin{proof} With a few simple technicalities,
(\ref{LCDirpar}) follows from the expression of the commutation condition
(\ref{compatcinpH}) for the connections (\ref{DpmcuLC}). Then, if we replace
$\mathcal{Z}$ by $(Id+\Psi_E)\mathcal{Z}$ in (\ref{tGLintegr}), and consider the
result for all possible combinations of arguments in $V_\pm$ while remembering
that the canonical big connection has no mixed torsion and satisfies
(\ref{Gtorsnabla2}), we see that the only condition required for the
integrability of $E$ is (\ref{psiDirpar}).
\end{proof}
\begin{example}\label{Psparalel} {\rm Take $E=graph\,\sharp_P$,
$P\in\chi^2(M)$. If we express $(\sharp_P\alpha,\alpha)$ by the first formula
(\ref{E}), we get
$$X+F_EX=\sharp_P\alpha,\;\flat_\psi(X+F_EX)+\flat_\gamma(X-F_EX)=\alpha,$$ which leads to
$$X=\frac{1}{2}\sharp_\gamma(\alpha-\flat_{\psi-\gamma}\sharp_P\alpha),
\;F_EX=-\frac{1}{2}\sharp_\gamma(\alpha-\flat_{\psi+\gamma}\sharp_P\alpha).$$
Thus, $Id-\flat_{\psi-\gamma}\sharp_P$ must be an isomorphism, which we may also
see as follows. $\forall U\in TM$ we have
$$<(Id-\flat_{\psi-\gamma}\sharp_P)\flat_{\psi+\gamma}U,U>=
(\psi+\gamma)(U,U)+<\sharp_P\flat_{\psi+\gamma}U,\flat_{\psi+\gamma}U>
=\gamma(U,U),$$ which vanishes only for $U=0$. Then, since $\psi+\gamma$ is non degenerate, $<(Id-\flat_{\psi-\gamma}\sharp_P)\flat_{\psi+\gamma}U=0$ iff $U=0$, and we are done. Similarly, $Id-\flat_{\psi+\gamma}\sharp_P$ is an
isomorphism.

In the previous expressions of $X,F_EX$ it is preferable to replace
$\alpha$ by $\flat_\gamma\sharp_\gamma\alpha$. Accordingly, the isometry $F_E$
gets the form
$$ F_EX=(Q^+-Id)(Q^-+Id)^{-1}X\hspace{5mm} (Q^\pm=\pm\sharp_\gamma\flat_{\psi\pm\gamma}\sharp_P\flat_\gamma).
$$
Then, we may check that $F_E$ satisfies condition (\ref{gammaF}) by writing
down the latter for $(Q^-+Id)X,(Q^-+Id)Y$ instead of $X,Y$ and taking into
account the skew symmetry of $\psi$ and $P$.

Now, if we replace the arguments $Y,Z$ by $(Q^-+Id)Y, (Q^-+Id)Z$ in
(\ref{LCDirpar}) and $X,Y,Z$ by $(Q^-+Id)X,(Q^-+Id)Y, (Q^-+Id)Z$ in
(\ref{psiDirpar},) we get the following characteristic conditions for
$graph\,\sharp_P$ to be parallel on $(M,G)$
\begin{equation}\label{LPDirpar} \begin{array}{l}
\gamma((Q^+-Id)Z,(D_XF_E)(Q^+-Id)Y)\vspace*{2mm}\\ =
\frac{1}{2}[d\psi(X,(Q^-+Id)Y,(Q^-+Id)Z)\vspace*{2mm}\\
+d\psi(X,(Q^+-Id)Y,(Q^+-Id)Z)],\vspace*{2mm}\\
d\psi((Q^-+Id)X,(Q^-+Id)Y,(Q^-+Id)Z)\vspace*{2mm}\\
+d\psi((Q^+-Id)X,(Q^+-Id)Y,(Q^+-Id)Z)=0.\end{array}
\end{equation}
If $\psi=0$, then, $Q^+=Q_-=Q$ and (\ref{LPDirpar}) reduce to $DF_E=0$,
equivalently,
$$DQ=(Q-Id)(Q+Id)^{-1}DQ.$$ Putting $(Q+Id)^{-1}DQ=S,$ the previous
condition becomes $$(Q+Id)S=(Q-Id)S,$$ i.e., $S=0$. But, $S=0$ iff
$D\sharp_P=0$. Thus, in the classical case, the graph of $P$ is parallel iff $P$
is a $\gamma$-parallel Poisson structure.}\end{example}
\section{Symplectic Dirac structures}
In this section we shall discuss a special kind of Dirac structures that appear
in connection with endomorphisms $\tau\in End (\mathbf{T}M)$ such that
(\ref{skewsym}) with $\Psi=\tau$ and $\epsilon=0$ holds, i.e.,
\begin{equation}\label{skewsymtg}  \tau^2=0\;\;g(\mathcal{X},\tau\mathcal{Y})
+g(\tau\mathcal{X},\mathcal{Y})=0.\end{equation}

In \cite{IV} such endomorphisms were called generalized subtangent structures.
In the present paper, a generalized subtangent structure of constant rank will
be called a {\it generalized 2-nilpotent structure}. If $rank\,\tau=dim\,M$, we
stick with the terminology of \cite{IV} and call $\tau$ a {\it generalized
almost tangent structure}.

By (\ref{skewsymtg}), the image $E=im\,\tau$ of a generalized, 2-nilpotent structure is a $g$-isotropic subbundle, i.e., a
big-isotropic structure in the sense of \cite{V2}. We denote by
$E^{\perp_g}$ the $g$-orthogonal subbundle of $E$ and notice that (\ref{skewsymtg}) implies
$E^{\perp_g}\subseteq ker\,\tau$. Moreover, since these subbundles have the same rank, we have
$E^{\perp_g}= ker\,\tau$. This remark leads to the existence of a well defined,
non-degenerate $2$-form $\omega\in\Gamma\wedge^2E^*$ ($E^*$ is the dual bundle
of $E$) given by
\begin{equation}\label{omega}
\omega(e_1,e_2)=g(e_1,\mathcal{X}_2),\hspace{2mm}e_1,e_2\in\Gamma
E,\,\tau\mathcal{X}_2=e_2\end{equation} (independent of the choice of $
\mathcal{X}_2$). The converse is also true, i.e., if $E$ is a big-isotropic
structure and $\omega\in\Gamma\wedge^2E^*$ is non degenerate, (\ref{omega})
uniquely defines an element $e_2=\tau\mathcal{X}_2\in E$ and we see that there
exists a unique generalized 2-nilpotent structure $\tau$ with $E=im\,\tau$ and with the
given form $\omega$. The non-degeneracy of $\omega$ implies the fact that a generalized
2-nilpotent structure $\tau$ has an even rank. It also follows that a
big-isotropic structure $E$ is the image of a generalized 2-nilpotent structure
$\tau$ iff the structure group of $E$ is reducible to a symplectic group.

Put $ \tilde{E}=\mathbf{T}M/E^{\perp_g}$. Since $E^{\perp_g}= ker\,\tau$,
$\tau$ induces an isomorphism $\tau':\tilde{E}\rightarrow E$ given by
$\tau'\mathcal{X}_{mod E^{\perp_g}}=\tau\mathcal{X}$. On $\tilde{E}$ we have the
skew-symmetric, non-degenerate $2$-form $\Lambda$ defined by
$$ \Lambda(\mathcal{X}_{mod
E^{\perp_g}}, \mathcal{Y}_{mod
E^{\perp_g}})=g(\tau\mathcal{X},\mathcal{Y}).$$ The quotient bundle
$\tilde{E}$ is canonically isomorphic to the dual bundle $E^*$ by means of the
pairing
$$<\mathcal{X}_{mod\,E^{\perp_g}},\mathcal{Y}>=g(\mathcal{X},\mathcal{Y})
\hspace{3mm}( \mathcal{X}\in\Gamma \mathbf{T}M,\mathcal{Y}\in\Gamma
E).$$ Thus, $\Lambda$ may be seen as a bivector field of $E$. Recall the musical
isomorphisms $\flat_\omega:E\rightarrow E^*$, $\flat_\omega e=i(e)\omega$, and
$\sharp_\Lambda:E^*\rightarrow E$, $\sharp_\Lambda\mathcal{X}_{mod
E^{\perp_g}}=i(\mathcal{X}_{mod E^{\perp_g}})\Lambda$. From the given
definitions we see that $\flat_\omega e\in\Gamma E^*$ identifies with $
\mathcal{Y}_{mod\,E^{\perp_g}}$ such that $\tau\mathcal{Y}=-e$. On the other hand,
we have $\sharp_\Lambda\mathcal{X}_{mod E^{\perp_g}}=\tau'\mathcal{X}_{mod
E^{\perp_g}}=\tau\mathcal{X}$, therefore, $\sharp_\Lambda\circ\flat_\omega=-Id$
and $\Lambda(\flat_\omega e_1,\flat_\omega e_2)=\omega(e_1,e_2)$.
\begin{defin}\label{slabint} {\rm The generalized 2-nilpotent
structure $\tau$ is {\it weakly integrable} if the big-isotropic structure
$E=im\,\tau$ is integrable, i.e., closed under Courant brackets. The generalized
2-nilpotent structure $\tau$ is {\it integrable} if its Courant-Nijenhuis
torsion is $\mathcal{N}_\tau=0$. If $\tau$ is integrable and
$rank\,\tau=dim\,M$, then $\tau$ is a {\it generalized tangent
structure}.}\end{defin}

Thus, $\tau$ is weakly integrable iff,
$\forall\,\mathcal{X},\mathcal{Y}\in\Gamma \mathbf{T}M$, one has
$[\tau\mathcal{X},\tau\mathcal{Y}]\in\Gamma E$, equivalently, the formula
\begin{equation}\label{2slabint}
[ \mathcal{X}_{{\rm mod}\,E^{\perp_g}},\mathcal{Y}_{{\rm
mod}\,E^{\perp_g}}]_{\tilde{E}}=\tau^{'-1}[\tau\mathcal{X},\tau\mathcal{Y}]
\end{equation}
yields a well defined new bracket on $\Gamma\tilde{E}$. Then,
$(\tilde{E},pr_{TM}\circ\tau',[\,,\,]_{\tilde{E}})$ is a Lie algebroid.

On the other hand, (\ref{NijPsi}) shows that $\tau$ is weakly integrable iff
$\mathcal{N}_\tau(
\mathcal{X},\mathcal{Y})\in\Gamma E$, therefore, integrability
implies weak integrability. In the almost tangent case, we have $E^{\perp_g}=E$
and the weak integrability condition becomes
$$ \tau\circ\mathcal{N}_\tau=0.$$
\begin{prop}\label{slab-tare} The generalized, 2-nilpotent
structure $\tau$ is integrable iff it is weakly integrable and $\omega$ is a
symplectic form of the Lie algebroid $E$.\end{prop}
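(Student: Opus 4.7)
My plan is to derive an explicit identity relating $(d_E\omega)$ and $\mathcal{N}_\tau$ under weak integrability, and then to conclude the equivalence using the non-degeneracy of $\omega$. That weak integrability must hold whenever $\tau$ is integrable follows immediately from the definition of $\mathcal{N}_\tau$ and $\tau^2 = 0$: if $\mathcal{N}_\tau = 0$, then $[\tau\mathcal{X}, \tau\mathcal{Y}] = \tau[\mathcal{X}, \tau\mathcal{Y}] + \tau[\tau\mathcal{X}, \mathcal{Y}] \in im\,\tau = E$. Moreover weak integrability is indispensable for the statement to make sense, since otherwise $E$ is not a Lie algebroid. Under weak integrability every term in the definition of $\mathcal{N}_\tau(\mathcal{X},\mathcal{Y})$ lies in $E$ (the first by weak integrability, the other two tautologically), so $\mathcal{N}_\tau \in \Gamma E$, and the pairing $\omega(\mathcal{N}_\tau(\mathcal{X},\mathcal{Y}), \tau\mathcal{Z}) = g(\mathcal{N}_\tau(\mathcal{X},\mathcal{Y}), \mathcal{Z})$ is well defined by the very definition of $\omega$.

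The core computation is to expand $(d_E\omega)(\tau\mathcal{X}, \tau\mathcal{Y}, \tau\mathcal{Z})$ by Cartan's formula on the Lie algebroid $E$, substitute $\omega(\tau\mathcal{U}, \tau\mathcal{V}) = g(\tau\mathcal{U}, \mathcal{V})$ and (using weak integrability) $\omega([\tau\mathcal{U}, \tau\mathcal{V}], \tau\mathcal{W}) = g([\tau\mathcal{U}, \tau\mathcal{V}], \mathcal{W})$, and handle the derivative terms via the Courant Leibniz identity
\[
pr_{TM}(\mathcal{A})\,g(\mathcal{B},\mathcal{C}) = g([\mathcal{A},\mathcal{B}],\mathcal{C}) + g(\mathcal{B},[\mathcal{A},\mathcal{C}]) + \tfrac{1}{2}\bigl[pr_{TM}(\mathcal{B})\,g(\mathcal{A},\mathcal{C}) + pr_{TM}(\mathcal{C})\,g(\mathcal{A},\mathcal{B})\bigr].
\]
After cyclic summation and repeated use of $g(\tau\cdot,\tau\cdot) = 0$ and $g(\tau\cdot,\cdot) = -g(\cdot,\tau\cdot)$, the Leibniz anomaly rearranges into a multiple of itself of opposite sign (by applying Leibniz a second time inside the anomaly), yielding the clean identity
\[
3\,(d_E\omega)(\tau\mathcal{X}, \tau\mathcal{Y}, \tau\mathcal{Z}) + \sum_{\mathrm{cyc}(\mathcal{X},\mathcal{Y},\mathcal{Z})} g(\mathcal{N}_\tau(\mathcal{X}, \mathcal{Y}), \mathcal{Z}) = 0.
\]
Integrability visibly gives $d_E\omega = 0$. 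For the converse, the auxiliary vanishing $\mathcal{N}_\tau(\mathcal{X}, \tau\mathcal{Y}') = -\tau[\tau\mathcal{X}, \tau\mathcal{Y}'] = 0$ (consequence of weak integrability since $[\tau\mathcal{X}, \tau\mathcal{Y}'] \in E$ and $\tau E = im\,\tau^2 = 0$) shows that $\mathcal{N}_\tau$ descends to a tensor on $(\mathbf{T}M/E)^{\otimes 2}$ valued in $E$; feeding this reduction together with the vanishing cyclic sum into the non-degeneracy of $\omega$ on $E$ extracts $\mathcal{N}_\tau(\mathcal{X},\mathcal{Y}) = 0$.

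The main obstacle is the bookkeeping in the core computation. The anomalous half-term in the Courant Leibniz rule does not drop out of the cyclic sum directly; one must re-apply Leibniz inside the anomaly to express it as $-\tfrac{1}{2}$ times the original cyclic sum of derivative terms, and only then does the factor $3$ on the left and the clean cyclic sum on the right emerge. Everything else — identifying $\mathcal{N}_\tau \in \Gamma E$, the auxiliary identity $\mathcal{N}_\tau(\mathcal{X},\tau\mathcal{Y}') = 0$, and the final use of non-degeneracy — is essentially routine.
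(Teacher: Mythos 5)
Your forward direction (integrable $\Rightarrow$ weakly integrable, and $\mathcal{N}_\tau\in\Gamma E$ under weak integrability) is fine, and your core computation starts the same way as the paper's: expand $d_E\omega(\tau\mathcal{X},\tau\mathcal{Y},\tau\mathcal{Z})$ by the Cartan formula and process the anchor terms with the Courant--Leibniz identity. But you symmetrize too early, and this opens a genuine gap in the converse. If you apply the Leibniz identity only to the second and third derivative terms $pr_{TM}\tau\mathcal{Y}(g(\tau\mathcal{X},\mathcal{Z}))$ and $pr_{TM}\tau\mathcal{Z}(g(\tau\mathcal{X},\mathcal{Y}))$, the $\partial g(\tau\cdot,\tau\cdot)$ anomalies die by isotropy of $E$ and the two surviving half-anomalies are $\pm\tfrac{1}{2}pr_{TM}\tau\mathcal{X}(g(\tau\mathcal{Y},\mathcal{Z}))$, which cancel the first derivative term outright; no cyclic summation is needed and one lands on the pointwise identity
\begin{equation*}
d_E\omega(\tau\mathcal{X},\tau\mathcal{Y},\tau\mathcal{Z})=-g(\mathcal{X},\mathcal{N}_\tau(\mathcal{Y},\mathcal{Z})),
\end{equation*}
which is what the paper proves. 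Your identity $3\,d_E\omega(\tau\mathcal{X},\tau\mathcal{Y},\tau\mathcal{Z})+\sum_{cyc}g(\mathcal{N}_\tau(\mathcal{X},\mathcal{Y}),\mathcal{Z})=0$ is the cyclic symmetrization of this, hence true, but strictly weaker.

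The gap is in extracting $\mathcal{N}_\tau=0$ from $d_E\omega=0$. Your hypotheses at that point are: $B(\mathcal{X},\mathcal{Y},\mathcal{Z}):=g(\mathcal{N}_\tau(\mathcal{X},\mathcal{Y}),\mathcal{Z})$ is skew in its first two slots, vanishes when a relevant argument lies in $E$ (resp.\ $E^{\perp_g}$ for the third slot), and has vanishing cyclic sum. A trilinear form skew in its first two arguments with vanishing cyclic sum is precisely one with vanishing total alternation, and the space of such forms is nonzero in every rank $\geq 2$ (already $(f^1\wedge f^2)\otimes f^1$ on a two-dimensional space satisfies all your constraints), so non-degeneracy of $\omega$ --- which only converts knowledge of $g(\mathcal{N}_\tau(\cdot,\cdot),\mathcal{Z})$ for all $\mathcal{Z}$ into knowledge of $\mathcal{N}_\tau$ --- cannot isolate a single term of the cyclic sum. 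What you are missing is exactly the fact that $g(\mathcal{X},\mathcal{N}_\tau(\mathcal{Y},\mathcal{Z}))$ is already totally skew under weak integrability, and the only natural way to see that is to prove the unsymmetrized identity above, at which point your cyclic summation becomes superfluous. The auxiliary observation $\mathcal{N}_\tau(\mathcal{X},\tau\mathcal{Y}')=0$ is correct but does not close this hole.
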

\begin{proof}
If $d_E$ denotes the exterior differential of the Lie algebroid $E$, we have
\begin{equation}\label {auxlema} \begin{array}{l}
d_E\omega(\tau\mathcal{X},\tau\mathcal{Y},\tau\mathcal{Z})=
pr_{TM}\tau\mathcal{X}(g(\tau\mathcal{Y},\mathcal{Z}))
-pr_{TM}\tau\mathcal{Y}(g(\tau\mathcal{X},\mathcal{Z}))\vspace*{2mm}\\
+pr_{TM}\tau\mathcal{Z}(g(\tau\mathcal{X},\mathcal{Y}))
-g([\tau\mathcal{X},\tau\mathcal{Y}],\mathcal{Z})+
g([\tau\mathcal{X},\tau\mathcal{Z}],\mathcal{Y})\vspace*{2mm}\\
-g([\tau\mathcal{Y},\tau\mathcal{Z}],\mathcal{X}).\end{array}\end{equation} If
the general property \cite{LWX}
$$pr_{TM} \mathcal{X}(g( \mathcal{Y},\mathcal{Z}))=
g([ \mathcal{X},\mathcal{Y}]+\partial g(\mathcal{X},\mathcal{Y}) ,\mathcal{Z}) +
g(\mathcal{Y},[ \mathcal{X},\mathcal{Z}]+\partial g(\mathcal{X},\mathcal{Z})),$$
where $\partial$ is defined by
\begin{equation}\label{defpartial} pr_{TM}\mathcal{X}(f)=2g(\mathcal{X},\partial f)
\hspace{3mm}(f\in C^\infty(M)),\end{equation}
is applied to the second and third term of the right hand side of
(\ref{auxlema}), reductions lead to the formula
$$
d_E\omega(\tau\mathcal{X},\tau\mathcal{Y},\tau\mathcal{Z})=-g(
\mathcal{X},\mathcal{N}_\tau(
\mathcal{Y},\mathcal{Z})),$$
which proves the conclusion of the proposition.
\end{proof}

The proposition characterizes the big-isotropic and Dirac
structures that are images of an integrable, generalized, 2-nilpotent structure and we will call them
{\it symplectic big-isotropic and Dirac structures}.
\begin{rem}\label{Lamdapi} {\rm The symplectic structure of the Lie algebroid
$E$ defines a Poisson structure on $M$, which is given by
$$\{f,h\}=\Lambda(d_Ef,d_Eh),\;\;f,h\in C^\infty(M).$$
Using (\ref{defpartial}), it follows that $d_Ef,d_Eh$ are represented by
$2[\partial f]_{{\rm mod}\,E^{\perp_g}}, 2[\partial h]_{{\rm mod}\,E^{\perp_g}}$
in $ \tilde{E}$ and that $\partial f=(1/2)(0,df)$. Thus, with the definition of
$\Lambda$, we get
$$\{f,h\}=g(\tau(0,df),(0,dh))=\pi(df,dh),$$ where $\pi$ is the bivector field
of the matrix representation (\ref{matriceaPsi}).}\end{rem}
\begin{example}\label{expresympl} {\rm For any closed $2$-form $\theta$,
$$graph\,\flat_\theta=\{(X,\flat_\theta X)\,/\,X\in TM\}$$
is a Dirac structure. Since one has
$$[(X,\flat_\theta X),(Y,\flat_\theta
Y)]=([X,Y],\flat_\theta[X,Y])$$ (see (\ref{Cptpsi})), $(X,\flat_\theta X)\mapsto X$ is an isomorphism
between the Lie algebroids $E_\theta$ and $TM$, which identifies $2$-$E$-forms
with differential $2$-forms on $M$ and $d_E$ with $d$. Proposition
\ref{slab-tare} gives a bijection between the
generalized tangent structures $\tau$ on $M$ with image $graph\,\theta$ and
symplectic forms $\mu$ on $M$. With (\ref{omega}), we get the expression of this
correspondence:
$$\tau(X,\alpha)=(U,\flat_\theta U),\hspace{5mm} U=
\sharp_\mu(\alpha-\flat_\theta
X)\hspace{3mm}(\sharp_\mu\flat_\mu=-Id).$$ If the manifold $M$ has no symplectic
forms, the graph of a presymplectic form is not the image of a generalized
tangent structure.}\end{example}
\begin{example}\label{exPoisson} {\rm If $P$ is a Poisson bivector field,
$$graph\,P=\{(\sharp_P\alpha,\alpha)\,/\,\alpha\in T^*M\}$$ is a Dirac structure on $M$.
The Courant bracket within $graph\,P$ is
$$[(\sharp_P\alpha,\alpha),(\sharp_P\beta,\beta)]=
(\sharp_P\{\alpha,\beta\}_P,\{\alpha,\beta\}_P),$$ where the bracket of
$1$-forms is that of the Lie algebroid structure of $T^*M$ defined by $P$ (e.g.,
\cite{V-carte}). Therefore, the mapping $(\sharp_\Pi\alpha,\alpha)\mapsto\alpha$
is an isomorphism between the Lie algebroids $graph\,P$ and $T^*M$, and the
generalized tangent structures $\tau$ on $M$ with image $graph\,P$ are in a one-to-one correspondence with the non degenerate $2$-cocycles of the Lie algebroid $T^*M$,
i.e., the bivector fields $W$ on $M$ that satisfy the condition $[P,W]=0$
(Schouten-Nijenhuis bracket). Explicitly, the correspondence is given by
$$\tau(X,\alpha)=(\sharp_P\lambda,\lambda),\hspace{5mm}
\lambda=\flat_W(X-\sharp_P\alpha)\hspace{3mm}
(\flat_W\sharp_W=-Id).$$}\end{example}

In order to give another expression of the relation between integrability and
weak integrability we define the bracket
$$ [\mathcal{X},\mathcal{Y}]_\tau=
[\tau \mathcal{X},\mathcal{Y}]+[\mathcal{X},\tau
\mathcal{Y}],$$ which puts
the integrability condition $\mathcal{N}_\tau=0$ under the form
\begin{equation}\label{tauhomgen} \tau[\mathcal{X},\mathcal{Y}]_\tau=
[\tau \mathcal{X},\tau
\mathcal{Y}].\end{equation} Straightforward computations that use
(\ref{skewsymtg}) and the Courant algebroid axioms \cite{LWX} for $\mathbf{T}M$
give the following properties of the new bracket
\begin{equation}\label{taufgen}
[\mathcal{X},f\mathcal{Y}]_\tau= f[\mathcal{X},\mathcal{Y}]_\tau+pr_{TM}\tau
\mathcal{X}(f)\mathcal{Y}+pr_{TM}\mathcal{X}(f)\tau
\mathcal{Y},\end{equation}
\begin{equation}\label{tauJacobigen}  \begin{array}{lcl}
\sum_{Cycl(\mathcal{X},\mathcal{Y},\mathcal{Z})}
[[\mathcal{X},\mathcal{Y}]_\tau,\mathcal{Z}]_\tau&=&
\sum_{Cycl(\mathcal{X},\mathcal{Y},\mathcal{Z})}[\mathcal{Z},
\mathcal{N}_\tau(\mathcal{X},\mathcal{Y})]\vspace{2mm}\\
&+&\frac{1}{3}\partial\sum_{Cycl(\mathcal{X},\mathcal{Y},\mathcal{Z})}
g(\mathcal{Z},
\mathcal{N}_\tau(\mathcal{X},\mathcal{Y})).\end{array}
\end{equation}

Let us assume that $\tau$ is weakly integrable. Since the closure of $E$ under
Courant brackets is equivalent to $[\Gamma E,\Gamma E^{\perp_g}]\subseteq\Gamma
E^{\perp_g}$
\cite{V2}, we get $[
\mathcal{X},\mathcal{Y}]_\tau\in E^{\perp_g}$, $\forall\mathcal{X}\in\Gamma
\mathbf{T}M,\forall\mathcal{Y}\in\Gamma E^{\perp_g}$, and
$$
[\mathcal{X}_{mod E^{\perp_g}},\mathcal{Y}_{mod E^{\perp_g}}]_\tau=
[\mathcal{X},\mathcal{Y}]_{\tau,mod E^{\perp_g}}$$ is a well defined
bracket on the quotient bundle $\tilde{E}$, which we call the {\it induced
$\tau$-bracket}.
\begin{prop}\label{integr1} If the generalized 2-nilpotent structure
$\tau$ is integrable, then $\tilde{E}$ with the induced $\tau$-bracket and the
anchor $pr_{TM}\circ\tau'$ is a Lie algebroid and $\tau'$ is an isomorphism of
Lie algebroids. Furthermore, the weakly integrable, generalized, 2-nilpotent
structure $\tau$ is integrable iff the $\tau$-induced bracket of $\tilde{E}$ is equal
to the bracket $[\,,\,]_{\tilde{E}}$.\end{prop}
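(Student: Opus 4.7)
The plan is to unwind the definitions and use the structural identities (\ref{tauhomgen}), (\ref{taufgen}) and (\ref{tauJacobigen}) that have already been established for the $\tau$-bracket. The key observation that keeps everything well-behaved on the quotient is that $E=im\,\tau$ is $g$-isotropic, so $E\subseteq E^{\perp_g}$; hence any expression of the form $\tau\mathcal{Y}$ dies modulo $E^{\perp_g}$. This is what makes the Leibniz rule collapse to the expected form on $\tilde E$.

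For the first assertion, I would assume $\mathcal{N}_\tau=0$ and then verify the Lie algebroid axioms for $\tilde E$ directly. Skew-symmetry of the induced $\tau$-bracket is inherited from the Courant bracket. For the Leibniz rule, I would start from (\ref{taufgen}) and reduce mod $E^{\perp_g}$: the term $pr_{TM}\mathcal{X}(f)\tau\mathcal{Y}$ vanishes because $\tau\mathcal{Y}\in E\subseteq E^{\perp_g}$, and what remains is exactly the Leibniz identity with anchor $pr_{TM}\circ\tau'$. For Jacobi, I would apply (\ref{tauJacobigen}); both the cyclic sum on the right and the $\partial$-term involve $\mathcal{N}_\tau$ and therefore disappear under our assumption.

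Next I would identify $\tau'$ as a Lie algebroid isomorphism. Integrability in the form (\ref{tauhomgen}) gives $\tau[\mathcal{X},\mathcal{Y}]_\tau=[\tau\mathcal{X},\tau\mathcal{Y}]$, which says precisely that $\tau'$ sends the induced $\tau$-bracket to the Courant bracket on $E$; the anchors match trivially because the anchor on $\tilde E$ was defined as $pr_{TM}\circ\tau'$. In fact, this same identity, compared with the definition (\ref{2slabint}) of $[\,,\,]_{\tilde E}$, shows that in the integrable case the induced $\tau$-bracket and $[\,,\,]_{\tilde E}$ coincide on $\tilde E$, which gives a second (and shorter) route to the Lie algebroid structure.

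For the equivalence in the second assertion, the forward implication is exactly the coincidence just noted. Conversely, if the induced $\tau$-bracket agrees with $[\,,\,]_{\tilde E}$ on $\tilde E$, then applying $\tau'$ and invoking (\ref{2slabint}) yields $\tau[\mathcal{X},\mathcal{Y}]_\tau=[\tau\mathcal{X},\tau\mathcal{Y}]$, which by (\ref{tauhomgen}) is $\mathcal{N}_\tau=0$. The main thing to keep straight is the distinction between the two brackets living on the same quotient $\tilde E$ and the fact that $\tau'$ intertwines them only when $\tau$ is integrable; once that bookkeeping is explicit, everything reduces to quoting the identities already derived, and no substantial obstacle remains.
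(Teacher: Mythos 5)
Your proposal is correct and follows essentially the same route as the paper: the paper's (very terse) proof likewise verifies the Lie algebroid axioms for the induced $\tau$-bracket from (\ref{taufgen}), (\ref{tauJacobigen}) and $\mathcal{N}_\tau=0$, and obtains the second assertion by comparing (\ref{2slabint}) with the integrability condition (\ref{tauhomgen}). You merely make explicit details the paper leaves to the reader, e.g.\ that the term $pr_{TM}\mathcal{X}(f)\tau\mathcal{Y}$ in the Leibniz identity dies modulo $E^{\perp_g}$ because $E=im\,\tau$ is $g$-isotropic, and you note in passing that the coincidence of the two brackets gives an alternative shortcut to the first assertion.
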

\begin{proof} For the first part of the proposition
check the axioms of a Lie algebroid using (\ref{taufgen}), (\ref{tauJacobigen})
and $ \mathcal{N}_\tau=0$ (the Lie algebroid $E$ has the usual Courant bracket
of $ \mathbf{T}M$). In the second part of the proposition, $[\,,\,]_{\tilde{E}}$
is the bracket defined by (\ref{2slabint}) and the conclusion follows from the integrability condition (\ref{tauhomgen}).\end{proof}
\begin{rem}\label{E*} {\rm We may transfer the previous Lie algebroid
structure of $\tilde{E}$ to $E^*$. Thus, Proposition \ref{integr1} may be
reformulated in terms of $E^*$. On the other hand, we may transfer the Lie
algebroid structure to any subbundle $\mathcal{Q}$  such that
$\mathbf{T}M=E^{\perp_g}\oplus\mathcal{Q}$; then, $\tau'$ yields an isomorphism
$\tau'_{\mathcal{Q}}:\mathcal{Q}\rightarrow E$.}\end{rem}

Furthermore, for any weakly integrable, generalized, 2-nilpotent structure $\tau$
with $2$-form $\omega$ and the corresponding inverse $\Lambda$, we have the
Gelfand-Dorfman dual bracket
\cite{D}
\begin{equation}\label{crosetGD} \{ \mathcal{X}_{mod\,E^{\perp_g}},
\mathcal{Y}_{mod\,E^{\perp_g}}\}_\Lambda=L_{\sharp_\Lambda
\mathcal{X}_{mod\,E^{\perp_g}}} \mathcal{Y}_{mod\,E^{\perp_g}}
-L_{\sharp_\Lambda\mathcal{Y}_{mod\,E^{\perp_g}}}
\mathcal{X}_{mod\,E^{\perp_g}}\end{equation} $$- d_{E}(\Lambda(
\mathcal{X}_{mod\,E^{\perp_g}}, \mathcal{Y}_{mod\,E^{\perp_g}})),$$ where
the Lie derivative and the differential are those of the Lie algebroid $E$. We
continue to use the identification of $E^*$ with $\mathbf{T}M/E^{\perp_g}$ by
the $g$-pairing and the identification of
$\sharp_\Lambda\mathcal{X}_{mod\,E^{\perp_g}}$ with $\tau\mathcal{X}$. Then, the
evaluation of the bracket (\ref{crosetGD}) on $\tau\mathcal{Z}\in\Gamma E$
yields
$$ \begin{array}{c}
<\{ \mathcal{X}_{mod\,E^{\perp_g}},
\mathcal{Y}_{mod\,E^{\perp_g}}\}_\Lambda,\tau\mathcal{Z}>\vspace*{2mm}\\
=-d_E\omega(\tau\mathcal{X},
\tau\mathcal{Y},\tau\mathcal{Z})+
\omega(\tau\mathcal{Z},[\tau\mathcal{X},\tau\mathcal{Y}]),
\end{array}$$
which is equivalent to
\begin{equation}\label{aux2}
\{ \mathcal{X}_{mod\,E^{\perp_g}}, \mathcal{Y}_{mod\,E^{\perp_g}}\}_\Lambda=
\tau^{'-1}[\sharp_\Lambda\mathcal{X}_{mod\,E^{\perp_g}},
\sharp_\Lambda\mathcal{Y}_{mod\,E^{\perp_g}}]\end{equation} $$-
i((\sharp_\Lambda\mathcal{X}_{mod\,E^{\perp_g}})
\wedge(\sharp_\Lambda\mathcal{Y}_{mod\,E^{\perp_g}}))d_E\omega.$$
\begin{prop}\label{intcuGD} The weakly integrable, generalized,
2-nilpotent structure $\tau$ is integrable iff
$(E^*,\{\,,\,\}_\Lambda,\sharp_\Lambda)$ is a Lie algebroid.\end{prop}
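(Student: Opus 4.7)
The plan is to reduce the claim to Proposition \ref{slab-tare} via the explicit comparison formula (\ref{aux2}). Since $\tau$ is already assumed weakly integrable, Proposition \ref{slab-tare} tells us that $\tau$ is integrable iff $d_E\omega=0$, so it suffices to show that the closedness of $\omega$ on the Lie algebroid $E$ is equivalent to $(E^*,\{\,,\,\}_\Lambda,\sharp_\Lambda)$ being a Lie algebroid. Using the canonical identification $E^*\cong\tilde E$ through the $g$-pairing together with $\sharp_\Lambda\mathcal{X}_{{\rm mod}\,E^{\perp_g}}=\tau\mathcal{X}$, formula (\ref{aux2}) reads
$$\{\mathcal{X}_{{\rm mod}\,E^{\perp_g}},\mathcal{Y}_{{\rm mod}\,E^{\perp_g}}\}_\Lambda
=[\mathcal{X}_{{\rm mod}\,E^{\perp_g}},\mathcal{Y}_{{\rm mod}\,E^{\perp_g}}]_{\tilde E}
-i(\tau\mathcal{X}\wedge\tau\mathcal{Y})d_E\omega,$$
so that $\{\,,\,\}_\Lambda$ differs from the already well-defined bracket (\ref{2slabint}) by the purely $C^\infty(M)$-bilinear tensor $c:=-i(\tau\cdot\wedge\tau\cdot)d_E\omega$.

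For the forward direction, if $\tau$ is integrable then Proposition \ref{slab-tare} gives $d_E\omega=0$, whence $c=0$ and $\{\,,\,\}_\Lambda=[\,,\,]_{\tilde E}$. By weak integrability the triple $(\tilde E,pr_{TM}\circ\tau',[\,,\,]_{\tilde E})$ is a Lie algebroid (this was already noted after (\ref{2slabint}) and used in Proposition \ref{integr1}), and transporting this structure through the identification $\tilde E\cong E^*$ gives precisely $(E^*,\{\,,\,\}_\Lambda,\sharp_\Lambda)$ as a Lie algebroid, since $\sharp_\Lambda$ corresponds to $\tau'$.

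For the converse, assume $(E^*,\{\,,\,\}_\Lambda,\sharp_\Lambda)$ is a Lie algebroid. The Leibniz rule holds for both $\{\,,\,\}_\Lambda$ and $[\,,\,]_{\tilde E}$ with the common anchor $pr_{TM}\circ\tau'$, automatically because their difference $c$ is $C^\infty(M)$-bilinear. Hence the entire content of the hypothesis reduces to the Jacobi identity for $\{\,,\,\}_\Lambda$. Expanding the Jacobiator of $[\,,\,]_{\tilde E}+c$ and using $\mathrm{Jac}([\,,\,]_{\tilde E})=0$ (weak integrability), the remaining cross-terms, evaluated by the $g$-pairing against an arbitrary $\tau\mathcal{W}\in\Gamma E$ and simplified by means of the identity
$$d_E\omega(\tau\mathcal{X},\tau\mathcal{Y},\tau\mathcal{Z})=-g(\mathcal{X},\mathcal{N}_\tau(\mathcal{Y},\mathcal{Z}))$$
obtained inside the proof of Proposition \ref{slab-tare}, collapse to an expression that vanishes identically iff $d_E\omega\equiv 0$. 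A second application of Proposition \ref{slab-tare} then yields $\mathcal{N}_\tau=0$, i.e., $\tau$ is integrable.

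The main obstacle is the bookkeeping of the Jacobiator cross-terms in the converse direction: conceptually this is the classical Gelfand-Dorfman/Magri-Morosi fact that a non-degenerate bivector $\Lambda$ on a Lie algebroid $E$ yields a dual Lie algebroid structure on $E^*$ iff $\Lambda$ is Poisson, which for a non-degenerate bivector is equivalent to the closedness of its inverse $2$-form $\omega$. In the present setting, the identification (\ref{aux2}) already packages this correspondence, so the substantive work is just to verify that $c$ is exactly the obstruction to the Jacobi identity; this can either be done by a direct but careful computation using (\ref{taufgen})--(\ref{tauJacobigen}), or, more elegantly, by invoking the general Gelfand-Dorfman theorem for Lie algebroids once the identification has been made.
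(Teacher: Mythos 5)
Your forward direction is essentially the paper's (the paper simply quotes the standard fact that a Poisson bivector on a Lie algebroid yields a dual Lie algebroid; your variant of identifying $\{\,,\,\}_\Lambda$ with $[\,,\,]_{\tilde E}$ via (\ref{aux2}) once $d_E\omega=0$ is equally valid). The converse is where you diverge, and where the paper is markedly more economical: rather than examining the Jacobiator, the paper applies the anchor to (\ref{aux2}) and uses the bracket--anchor compatibility of a Lie algebroid to conclude immediately that $\sharp_\Lambda\bigl(i((\sharp_\Lambda\mathcal{X}_{{\rm mod}\,E^{\perp_g}})\wedge(\sharp_\Lambda\mathcal{Y}_{{\rm mod}\,E^{\perp_g}}))d_E\omega\bigr)=0$, whence $d_E\omega=0$ by non-degeneracy of $\Lambda$; no Jacobi-identity bookkeeping is needed. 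Your route through the Jacobiator of $[\,,\,]_{\tilde E}+c$ is sound in principle --- it is the classical Gelfand--Dorfman/Koszul computation, and the paper's own Remark \ref{obs3} concedes that the proposition ``is just the known fact'' that $\Lambda$ is Poisson iff $\omega_E$ is symplectic, so citing that theorem outright would be acceptable --- but as written the decisive step (that the cross-terms vanish identically iff $d_E\omega\equiv 0$) is asserted rather than verified, so if you insist on the direct computation you still owe that calculation. In short: correct strategy, legitimate fallback via the general theorem, but you missed the cheaper argument that extracts $d_E\omega=0$ from the anchor axiom alone.
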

\begin{proof} If $\tau$ is integrable, $\omega_E$ is
symplectic, $\Lambda$ is a Poisson structure and
$(E^*,\{\,,\,\}_\Lambda,\sharp_\Lambda)$ is the corresponding, dual Lie
algebroid. Conversely, if $(E^*,\{\,,\,\}_\Lambda, \sharp_\Lambda)$ is a Lie
algebroid and we apply its anchor to (\ref{aux2}), we get
$$\sharp_\Lambda(i((\sharp_\Lambda\mathcal{X}_{mod\,E^{\perp_g}})
\wedge(\sharp_\Lambda\mathcal{Y}_{mod\,E^{\perp_g}}))d_E\omega)=0,$$
which is equivalent with $d_E\omega=0$.\end{proof}
\begin{rem}\label{obs3} {\rm Proposition \ref{intcuGD} is just
the known fact that $\Lambda$ is a Poisson bivector of $E$ iff
$\omega_E$ is a symplectic form. Essentially, the proposition tells that the
weakly integrable, generalized, 2-nilpotent structure $\tau$ is integrable iff
$(E,E^*)$ has a natural structure of a triangular Lie bialgebroid.}\end{rem}
\section{Metrics and Symplectic Dirac structures}
In this section we discuss symplectic big-isotropic and Dirac structures $E=im\tau$ on a generalized Riemannian manifold $(M,G)$ and we shall use again the notation of Sections 2 and 4. We start with the following remarks.
The endomorphism $\phi\in End( \mathbf{T})$ associated with $G$ is both a
$g$-isometry and a $G$-isometry. Firstly, this implies that $\phi(E)$ is again a $g$-isotropic subbundle of $\mathbf{T}M$ and $\phi(E^{\perp_g})=(\phi(E))^{\perp_g}$. Secondly, these properties imply the relations
$$(E^{\perp_G})^{\perp_g}=(E^{\perp_g})^{\perp_G}=\phi(E).$$
Thus, if the
subbundle $S$ is such that $\label{Eperpg}E^{\perp_g}=E\oplus_{\perp_G}S$, we
have a decomposition
\begin{equation}\label{descortog}
\mathbf{T}M=(E\oplus_{\perp_G}\phi(E))\oplus_{\perp_G}S,\end{equation}
where the subbundles $E\oplus_{\perp_G}\phi(E)$ and $S$ are invariant by $\phi$.

By (\ref{descortog}), since $E^{\perp_g}=ker\,\tau$, the mapping $\tau'_{\phi(E)}:\phi(E)\rightarrow E$ defined by
$\tau|_{\phi(E)}$ is an isomorphism.
\begin{defin}\label{deftauG} {\rm The structures $G$, $\tau$ are
{\it compatible}, and the pair $(G,\tau)$ is a generalized, metric, 2-nilpotent structure, if $\tau'_{\phi(E)}$ is a $G$-isometry, i.e.,
\begin{equation}\label{Gtau} G(\tau \mathcal{X},\tau \mathcal{Y})
=G(\mathcal{X},\mathcal{Y}), \hspace{5mm}\forall
\mathcal{X},\mathcal{Y}\in\phi(E). \end{equation}}\end{defin}

The following proposition gives several equivalent conditions.
\begin{prop}\label{equiGtau} The structures $G$ and $\tau$ are
compatible iff one of the following conditions holds:

1) for any $\mathcal{X},\mathcal{Y}\in T_x^{big}M$ $(x\in M)$ one has
\begin{equation}\label{Gtaunou} G(\tau\phi\tau\mathcal{X},
\tau\phi\tau\mathcal{Y})
=G(\tau\mathcal{X},\tau\mathcal{Y});
\end{equation}

2) the form $\omega_E$ associated with $\tau$ satisfies the condition
\begin{equation}\label{Gtaucuomega}
\omega_E(\lambda\tau\mathcal{X},\lambda\tau\mathcal{Y})=\omega_E(\tau\mathcal{X},
\tau\mathcal{Y}),\end{equation} where $\lambda=\tau\circ\phi:E\rightarrow
E$;

3) the morphism $\lambda=\tau\circ\phi:E\rightarrow E$ is a complex structure on
$E$ (i.e., $\lambda^2=-Id$);

4) the morphism  $\lambda'=\phi\circ\tau:\phi(E)\rightarrow
\phi(E)$ is a complex structure on $\phi(E)$ ($\lambda^{'2}=-Id$);

5) the morphism $\tilde{\lambda}=\tau\circ\phi:\mathbf{T}M\rightarrow
\mathbf{T}M$ satisfies the condition $\tilde\lambda^3+\tilde\lambda=0$;

6) the morphism  $\tilde\lambda'=\phi\circ\tau:\mathbf{T}M\rightarrow
\mathbf{T}M$ satisfies the condition
$\tilde\lambda^{'3}+\tilde\lambda'=0$.\end{prop}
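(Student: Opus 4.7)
My strategy is to show that each of the six conditions is equivalent to (3), the relation $\lambda^2=-Id$ on $E$, by means of two universal identities. First I would establish that $\tilde{\lambda}=\tau\circ\phi$ is $G$-skew-symmetric on $\mathbf{T}M$: combining the $g$-skew-symmetry of $\tau$ from (\ref{skewsymtg}) with the $g$-self-adjointness of $\phi$ (which follows from the symmetry of $G$ together with (\ref{eqGg})), one computes
\[
G(\tilde{\lambda}\mathcal{X},\mathcal{Y})=g(\phi\tau\phi\mathcal{X},\mathcal{Y})=g(\tau\phi\mathcal{X},\phi\mathcal{Y})=-g(\phi\mathcal{X},\tau\phi\mathcal{Y})=-G(\mathcal{X},\tilde{\lambda}\mathcal{Y}).
\]
In particular $\lambda=\tilde{\lambda}|_E$ is $G$-skew. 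Second I would derive the identity
\[
G(e_1,e_2)=\omega_E(e_1,\lambda e_2),\qquad e_1,e_2\in E,
\]
by writing $G(e_1,e_2)=g(e_1,\phi e_2)$ (self-adjointness of $\phi$) and applying (\ref{omega}) with $\mathcal{X}_2=\phi e_2$, for which $\tau\mathcal{X}_2=\lambda e_2$.

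Next I would treat the equivalence of compatibility with (1) and (3). Every $\mathcal{X}\in\phi(E)$ is uniquely of the form $\phi e$ with $e\in E$, and since $\phi$ is a $G$-isometry the condition (\ref{Gtau}) rewrites as $G(\lambda e,\lambda e')=G(e,e')$ for all $e,e'\in E$, i.e. ``$\lambda$ is a $G$-isometry of $E$''; condition (1) is the very same statement read on $\tau(\mathbf{T}M)=E$. Because $\lambda$ is $G$-skew, being a $G$-isometry is equivalent to $\lambda^t\lambda=Id=-\lambda^2$, which is exactly (3). For (2) $\Leftrightarrow$ (3), the two identities above combine into
\[
\omega_E(\lambda e_1,\lambda e_2)=G(\lambda e_1,e_2)=-G(e_1,\lambda e_2)=-\omega_E(e_1,\lambda^2 e_2),
\]
so (\ref{Gtaucuomega}) becomes $\omega_E(e_1,(Id+\lambda^2)e_2)=0$ for all $e_1,e_2\in E$, and the non-degeneracy of $\omega_E$ yields $\lambda^2=-Id$.

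For the remaining items, note that $\phi|_E:E\to\phi(E)$ is a bijection with inverse $\phi|_{\phi(E)}$, and a direct computation gives $\lambda'\circ\phi|_E=\phi|_E\circ\lambda$. Consequently $(\lambda')^2=\phi\circ\lambda^2\circ\phi^{-1}$, so (3) $\Leftrightarrow$ (4). Finally, since $\tilde{\lambda}(\mathbf{T}M)=\tau(\mathbf{T}M)=E$ and $\tilde{\lambda}|_E=\lambda$, one has $\tilde{\lambda}^3\mathcal{X}=\lambda^2(\tilde{\lambda}\mathcal{X})$, hence $\tilde{\lambda}^3+\tilde{\lambda}=0$ iff $\lambda^2=-Id$ on $E$, which is (5) $\Leftrightarrow$ (3); the same argument applied to $\tilde{\lambda}'=\phi\tau$ gives (6) $\Leftrightarrow$ (4). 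The only delicate point I expect is the clean derivation of the two universal identities at the outset; once these are in hand, the rest is algebraic bookkeeping across the subspaces $E$, $\phi(E)$, $\mathbf{T}M$ and the quotient $\tilde{E}$.
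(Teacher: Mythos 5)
Your proof is correct and follows essentially the same route as the paper: reduce all six conditions to $\lambda^2=-Id$ using the skew-symmetry of $\lambda$ (equivalently of $\omega_E$ under $\lambda$), the identity $G(e_1,e_2)=\omega_E(e_1,\lambda e_2)$ (which is the paper's formula (\ref{Hermit})), the non-degeneracy of $\omega_E$ (resp.\ $G|_E$), the conjugation relation between $\lambda$ and $\lambda'$ via $\phi$, and the fact that $\tilde\lambda,\tilde\lambda'$ have images $E,\phi(E)$ and kill the complement. The only organizational difference is that you isolate the two auxiliary identities up front and work with $G|_E$ where the paper works directly with $g$ and $\omega_E$; the underlying computations coincide.
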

\begin{proof} Condition 1) is equivalent to (\ref{Gtau}) because
the general expression of elements of $\phi(E)$ is
$\phi\tau\mathcal{X},\phi\tau\mathcal{Y}$ and $\phi$ is a $G$-isometry.

Furthermore, rewrite (\ref{Gtaunou}) as
$$g(\phi\lambda\tau\mathcal{X},\lambda\tau\mathcal{Y})
=g(\phi\tau\mathcal{X},\tau\mathcal{Y}).$$ The definition of $\omega_E$
transforms the latter into the equality
\begin{equation}\label{auxptlambda}
\omega_E(\lambda^2\tau\mathcal{X},\lambda\tau\mathcal{Y})
=\omega_E(\lambda\tau\mathcal{X},\tau\mathcal{Y}),\end{equation} which, therefore,
also is equivalent with the compatibility between $G$ and $\tau$. Since
$\lambda$ is an isomorphism of $E$, we may take
$\lambda\tau\mathcal{X}=\tau\mathcal{U}$ and we see that (\ref{auxptlambda}) is equivalent to (\ref{Gtaucuomega}).

On the other hand, using the definition of $\omega_E$ and the $g$-skew-symmetry of $\tau$, we get
$$
\omega_E(\lambda\tau\mathcal{X},\tau\mathcal{Y})=
-\omega_E(\tau\mathcal{X},\lambda\tau\mathcal{Y}).$$ This implies the
equivalence of (\ref{auxptlambda}) with
$$\omega_E(\lambda^2\tau\mathcal{X},\lambda\tau\mathcal{Y})
=-\omega_E(\tau\mathcal{X},\lambda\tau\mathcal{Y}),$$ which is
equivalent with $\lambda^2=-Id$ because $\omega_E$ is non degenerate.
Thus, we have proven conditions 2) and 3).

Then, $\lambda^2=-Id$ is equivalent to
$$g(\tau\phi\tau\phi\tau\mathcal{X},\phi\mathcal{Y})=-g(\tau\mathcal{X},
\phi\mathcal{Y}),\;\;\forall\mathcal{X},\mathcal{Y}\in\Gamma
\mathbf{T}M,$$ which transforms into
$$g(\tau\mathcal{X},\lambda^{'2}\phi\mathcal{Y})=-g(\tau\mathcal{X},\phi\mathcal{Y}).$$
It follows that $\lambda^2=-Id$ is equivalent with $\lambda^{'2}=-Id$, which is
condition 4).

Finally, since $\phi$ preserves the subbundle $S$, $\tilde\lambda$ vanishes on
$S\oplus\phi(E)$, which, together with $\lambda^2=-Id$ implies
$\tilde\lambda^3+\tilde\lambda=0$ and conversely. Similarly, $\lambda^{'2}=-Id$
is equivalent to $\tilde\lambda^{'3}+\tilde\lambda'=0$. This proves conditions 5) and 6).\end{proof}

Using (\ref{eqGg}) we see that
\begin{equation}\label{Hermit} G(\tau\mathcal{X},\tau\mathcal{Y})
=\omega_E(\tau\mathcal{X},\lambda\tau\mathcal{Y}).\end{equation} Then, if we use for
Lie algebroids the same terminology as for manifolds, we have
\begin{prop}\label{propHermit} If $(M,G)$ is a generalized
Riemannian manifold and $E$ is a $g$-isotropic subbundle of $\mathbf{T}M$, there
exist a bijection between the set of generalized, metric, 2-nilpotent structures
$\tau$ with $im\,\tau=E$ and the set of complex structures $\lambda$ on $E$ that
are compatible with $G|_E$. The structure $\tau$ is integrable iff $E$ is closed
by Courant brackets and $(G|_E,\lambda)$ is an almost K\"ahler structure on the
Lie algebroid $E$.
\end{prop}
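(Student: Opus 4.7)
The plan is to produce the bijection from the $\omega_E$-correspondence already set up in Section~4, and then to read off the integrability statement directly from Proposition~\ref{slab-tare}.

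For the first half, I would start from a generalized metric $2$-nilpotent structure $\tau$ with $\mathrm{im}\,\tau=E$ and set $\lambda:=\tau\circ\phi|_E$. Condition~3) of Proposition~\ref{equiGtau} makes $\lambda$ a complex structure on $E$, and formula~(\ref{Hermit}) specializes to $G|_E(u,v)=\omega_E(u,\lambda v)$ for all $u,v\in E$. Thus $(G|_E,\lambda,\omega_E)$ is an almost Hermitian triple on the vector bundle $E$, and in particular $\lambda$ is $G|_E$-compatible. Conversely, given such a $\lambda$, I would define $\omega_E(u,v):=G|_E(\lambda u,v)$; antisymmetry follows from the $G|_E$-invariance of $\lambda$ combined with the symmetry of $G|_E$, and nondegeneracy from that of $G|_E$ together with $\lambda^2=-\mathrm{Id}$. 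The unique-reconstruction statement recorded right after~(\ref{omega}) then produces a unique generalized $2$-nilpotent $\tau$ with image $E$ and associated $2$-form $\omega_E$, and the two constructions are mutually inverse because $G|_E$, $\lambda$ and $\omega_E$ determine one another through the standard almost Hermitian identity.

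The main obstacle, and the only step where the hypothesis on $\lambda$ is genuinely used, is checking that the $\tau$ built from a $G|_E$-compatible $\lambda$ is itself $G$-compatible. I would do this by verifying condition~2) of Proposition~\ref{equiGtau}, namely $\omega_E(\lambda\tau\mathcal{X},\lambda\tau\mathcal{Y})=\omega_E(\tau\mathcal{X},\tau\mathcal{Y})$, which is immediate from $\omega_E(u,v)=G|_E(\lambda u,v)$ together with the $G|_E$-invariance of $\lambda$.

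For the integrability claim, I would invoke Proposition~\ref{slab-tare} directly: $\tau$ is integrable iff it is weakly integrable, which by definition means that $E$ is closed under Courant brackets, \emph{and} $\omega_E$ is a symplectic form on the Lie algebroid $E$, i.e.\ $d_E\omega_E=0$. Under the bijection established above, $\omega_E$ is precisely the fundamental form of the almost Hermitian pair $(G|_E,\lambda)$ on $E$, so the closure condition $d_E\omega_E=0$ is exactly the almost K\"ahler condition on the Lie algebroid $E$. This delivers the second half of the proposition.
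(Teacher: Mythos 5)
Your proposal is correct and takes essentially the same route as the paper: the bijection is obtained by passing between $\lambda$ and $\omega_E$ via (\ref{Hermit}) and the $\omega_E\leftrightarrow\tau$ reconstruction (the paper just records this as the explicit formula $\tau|_{E^{\perp_g}}=0$, $\tau|_{\phi(E)}=\lambda\circ\phi$), and the integrability claim is read off from Proposition \ref{slab-tare} exactly as you do. Your write-up is in fact more complete than the paper's one-line proof, which leaves both the $G$-compatibility check and the integrability half implicit.
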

\begin{proof} For a given $\lambda$ that satisfies the hypotheses, formula (\ref{Hermit}) yields $\omega_E$, which, then, produces
the following structure $\tau$:
$$ \tau|_{E^{\perp_g}}=0,\;\;\tau|_{\phi(E)}=\lambda\circ\phi.$$\end{proof}
\begin{defin}\label{tipK} {\rm An integrable, generalized, metric, 2-nilpotent
structure $(G,\tau)$ is of the {\it K\"ahler type} if the associated complex
structure $\lambda$ is integrable in the sense that it has a vanishing $E$-Nijenhuis tensor; the latter is defined like the usual Nijenhuis tensor but with brackets in $\Gamma E$.}\end{defin}

The Riemannian Lie
algebroid $(E,G|_E)$ has a Levi-Civita $E$-connection $D^E$
\cite{Bou} and we have
\begin{prop}\label{propKN} An integrable, generalized, metric, 2-nilpotent
structure $(G,\tau)$ is of the {\it K\"ahler type} iff
$D^E_\mathcal{X}\lambda=0$, $\forall\mathcal{X}\in\Gamma E$.
\end{prop}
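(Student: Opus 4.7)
The plan is to recognize that the hypothesis equips the Lie algebroid $E$ with an almost K\"ahler structure, and then invoke the standard equivalence---transferred from ordinary almost Hermitian geometry to Lie algebroids---between the K\"ahler condition and parallelism of the compatible complex structure.

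First I would assemble the structure. By Proposition \ref{slab-tare}, integrability of $\tau$ means that $\omega_E$ is a symplectic form on the Lie algebroid $E$. By Propositions \ref{equiGtau} and \ref{propHermit}, $\lambda=\tau\circ\phi|_E$ is a complex structure on $E$ compatible with $G|_E$, and formula (\ref{Hermit}) says that $\omega_E$ is (up to sign) the fundamental $2$-form of the almost Hermitian pair $(G|_E,\lambda)$. Hence $(E,G|_E,\lambda,\omega_E)$ is an almost K\"ahler structure on the Lie algebroid $E$, and Definition \ref{tipK} identifies the K\"ahler-type condition with the vanishing of the $E$-Nijenhuis tensor of $\lambda$.

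Second, I would invoke the Lie algebroid analogue of the classical Kostant identity from almost Hermitian geometry: for any $\mathcal{X},\mathcal{Y},\mathcal{Z}\in\Gamma E$,
$$
2G((D^E_\mathcal{X}\lambda)\mathcal{Y},\mathcal{Z})
= d_E\omega_E(\mathcal{X},\mathcal{Y},\mathcal{Z})
- d_E\omega_E(\mathcal{X},\lambda\mathcal{Y},\lambda\mathcal{Z})
- G(\mathcal{N}^E_\lambda(\mathcal{Y},\mathcal{Z}),\lambda\mathcal{X}),
$$
where $\mathcal{N}^E_\lambda$ is the $E$-Nijenhuis tensor of $\lambda$. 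Since $d_E\omega_E=0$ by step one, this collapses to $2G((D^E_\mathcal{X}\lambda)\mathcal{Y},\mathcal{Z})=-G(\mathcal{N}^E_\lambda(\mathcal{Y},\mathcal{Z}),\lambda\mathcal{X})$, and the non-degeneracy of $G|_E$ together with the invertibility of $\lambda$ yields immediately $D^E\lambda=0\Longleftrightarrow\mathcal{N}^E_\lambda=0$, which is the K\"ahler-type condition of Definition \ref{tipK}.

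The main obstacle is establishing the Kostant-type identity in the Lie algebroid setting. One reruns the classical derivation, which uses only the torsion-freeness and metric compatibility of $D^E$, Cartan's formula for $d_E$ on $E$-forms, and the compatibility $G(\lambda\cdot,\lambda\cdot)=G(\cdot,\cdot)$; these ingredients all transfer verbatim once vector fields are replaced by sections of $E$ and the Lie bracket by the Courant bracket restricted to $\Gamma E$. Thus the task is bookkeeping rather than conceptual, and no geometric input is required beyond what Propositions \ref{slab-tare} and \ref{propHermit} already supply.
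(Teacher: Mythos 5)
Your proposal is correct and follows essentially the same route as the paper: the paper's proof likewise transcribes the Kobayashi--Nomizu identity (Proposition IX.4.2 of \cite{KN}) to the Lie algebroid $E$, obtaining $G((D^E_{\mathcal X}\lambda)\mathcal{Y},\mathcal{Z})=\frac{1}{2}[d_E\omega_E(\mathcal{X},\mathcal{Y},\mathcal{Z})-d_E\omega_E(\mathcal{X},\lambda\mathcal{Y},\lambda\mathcal{Z})+G(\mathcal{N}_\lambda(\mathcal{X},\mathcal{Y}),\lambda\mathcal{Z})]$, and then uses $d_E\omega_E=0$ (from Proposition \ref{slab-tare}) plus non-degeneracy to conclude $D^E\lambda=0\Leftrightarrow\mathcal{N}_\lambda=0$. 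The only differences are cosmetic (placement of arguments in the Nijenhuis term and overall factor conventions).
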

\begin{proof}
The same calculations like in the proof of Proposition IX.4.2 of
\cite{KN} (with different factor conventions) give the formula
$$ \begin{array}{lcl}G((D^E_{\mathcal X}
\lambda)(\mathcal{Y}),\mathcal{Z}) &=&
\frac{1}{2}[d_E\omega_E(\mathcal{X},\mathcal{Y},\mathcal{Z})-
d_E\omega_E(\mathcal{X},\lambda\mathcal{Y},\lambda\mathcal{Z})\vspace{2mm}\\
&+&G(\mathcal{N}_\lambda(
\mathcal{X},\mathcal{Y}),\lambda\mathcal{Z}],\end{array}$$
for all $ \mathcal{X},\mathcal{Y},\mathcal{Z}\in\Gamma E$. The latter proves the required conclusion.
\end{proof}

The operators $ \tilde{\lambda},\tilde{\lambda}'$ are not generalized
F-structures
\cite{V-CRF} because they are not $g$-skew-symmetric. However, we
have
\begin{prop}\label{Fstr} A generalized, metric,
2-nilpotent structure $(G,\tau)$ has a canonically associated generalized,
metric F-structure.\end{prop}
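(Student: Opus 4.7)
The plan is to exhibit the endomorphism
\[
\Phi := \tau\circ\phi+\phi\circ\tau = \tilde\lambda+\tilde\lambda'
\]
and verify that it is a generalized F-structure of metric (Hermitian) type, by which I mean: (i) $\Phi$ is $g$-skew-symmetric, (ii) $\Phi^{3}+\Phi=0$, and (iii) $\Phi$ commutes with $\phi$, the natural $G$-compatibility in analogy with the Hermitian case $\phi\circ\mathcal{J}=\mathcal{J}\circ\phi$.

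For (i) I would use that $\tau$ is $g$-skew-symmetric by (\ref{skewsymtg}) and that $\phi$ is $g$-symmetric (an immediate consequence of $\phi^{2}=Id$ and the second relation in (\ref{Riemannbig2})). Then
\[
g(\Phi\mathcal{X},\mathcal{Y})=-g(\phi\mathcal{X},\tau\mathcal{Y})-g(\tau\mathcal{X},\phi\mathcal{Y})=-g(\mathcal{X},\phi\tau\mathcal{Y})-g(\mathcal{X},\tau\phi\mathcal{Y})=-g(\mathcal{X},\Phi\mathcal{Y}).
\]
Condition (iii) is immediate from $\phi^{2}=Id$, since $\phi\Phi=\phi\tau\phi+\tau=\Phi\phi$.

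The main step is (ii), which I would handle by evaluating $\Phi$ on the three $G$-orthogonal summands of the decomposition (\ref{descortog}). Both $E$ and $S$ are contained in $E^{\perp_{g}}=\ker\tau$, so $\tilde\lambda'=\phi\circ\tau$ vanishes on $E\oplus S$; moreover $\phi(\phi(E))=E\subseteq\ker\tau$ and $\phi(S)=S\subseteq\ker\tau$, so $\tilde\lambda=\tau\circ\phi$ vanishes on $\phi(E)\oplus S$. Therefore
\[
\Phi|_{E}=\lambda,\qquad \Phi|_{\phi(E)}=\lambda',\qquad \Phi|_{S}=0,
\]
and each of these three summands is $\Phi$-invariant. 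By items 3) and 4) of Proposition \ref{equiGtau}, $\lambda^{2}=-Id$ on $E$ and $\lambda'^{2}=-Id$ on $\phi(E)$; consequently $\Phi^{3}+\Phi$ vanishes on every summand, hence on $\mathbf{T}M$.

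I do not anticipate a genuine obstacle. Items 5) and 6) of Proposition \ref{equiGtau} already provide $\tilde\lambda^{3}+\tilde\lambda=0$ and $\tilde\lambda'^{3}+\tilde\lambda'=0$; the only additional point is to combine the two operators into a single $g$-skew-symmetric endomorphism. Summation is the correct operation because $\tilde\lambda$ and $\tilde\lambda'$ are $g$-adjoint up to a sign, so their sum is $g$-skew, and because they are supported on complementary subbundles, so the sum produces no cross terms that could spoil the cubic identity.
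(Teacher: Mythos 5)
Your proposal is correct and takes essentially the same route as the paper: the same operator $\Phi=\tau\circ\phi+\phi\circ\tau$, with $\Phi^{3}+\Phi=0$ deduced from Proposition \ref{equiGtau} via the $\phi$-invariant splitting $\mathbf{T}M=E\oplus\phi(E)\oplus S$, and the metric compatibility verified just as the paper does. Your condition (iii) $\phi\Phi=\Phi\phi$, combined with $g$-skew-symmetry, is precisely equivalent to the paper's second compatibility condition $G(\Phi\mathcal{X},\mathcal{Y})+G(\mathcal{X},\Phi\mathcal{Y})=0$, since $G(\cdot,\cdot)=g(\phi\cdot,\cdot)$.
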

\begin{proof} See  \cite{V-CRF} for the definition of generalized, metric
F-structures. The required structure is defined by
\begin{equation}\label{Phi2}\Phi=\tilde\lambda+\tilde\lambda'=
\tau\phi+\phi\tau.\end{equation} The properties of
$\tilde\lambda,\tilde\lambda'$ proven in Proposition
\ref{equiGtau} imply $\Phi^3+\Phi=0$. The metric compatibility conditions
$$g(\Phi\mathcal{X},\mathcal{Y})+
g(\mathcal{X},\Phi\mathcal{Y})=0,\; G(\Phi\mathcal{X},\mathcal{Y})+
G(\mathcal{X},\Phi\mathcal{Y})=0$$ easily check
for all possible combinations of arguments in $E,\phi(E),S$. (We
have to use the facts that $\phi$ is a $g$-isometry and that $S\perp_g
E,S\perp_G E$.)\end{proof}

Let us restrict ourselves to the almost tangent case. Then,
$ \mathbf{T}M=E\oplus\phi(E)$ and the structure
$(G,\Phi)$ associated to $(G,\tau)$ is a generalized almost Hermitian structure ($\Phi^2=-Id$). On the other hand, the pair $(G,\tau)$ has the associated,
generalized, almost paracomplex structure $\Psi=\Psi_E$ defined in Section 3,
i.e., $$ \Psi|_E=Id,\;\;
\Psi|_{\phi(E)}=-Id,$$  which is $G$-compatible. Furthermore, by checking separately on $E,\phi(E)$, we get
$$\Phi\circ\Psi=\Psi\circ\Phi=\tau\phi-\phi\tau=
\tilde{\lambda}-\tilde{\lambda}'.$$
Together with the expression (\ref{Phi2}) of $\Phi$ this leads to the equality
\begin{equation}\label{taucuPsi}
\tau=\frac{1}{2}\Phi\circ(Id+\Psi)\circ\phi.\end{equation}
\begin{prop}\label{thtaucuPsi} On a generalized Riemannian
manifold $(M,G)$, there exists a canonical bijection between the $G$-compatible,
generalized, almost tangent structures $\tau$ and the set of commuting pairs
$(\Phi,\Psi)$ where $\Phi$ is a $G$-compatible, generalized, almost complex
structure and $\Psi$ is a $G$-compatible, generalized, almost paracomplex
structure on $M$.\end{prop}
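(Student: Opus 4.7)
My plan is to show this is a bijection by constructing inverse maps in each direction and checking they are mutually inverse.

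For the forward direction $\tau \mapsto (\Phi,\Psi)$ the work is essentially already in place. Given a $G$-compatible generalized almost tangent $\tau$, I set $\Phi := \tau\phi + \phi\tau$ as in Proposition \ref{Fstr} and $\Psi := \Psi_E$ for $E=\mathrm{im}\,\tau$ as in Section 3. In the almost tangent case $\mathbf{T}M = E\oplus\phi(E)$, and from the block calculation $\tilde\lambda|_E=\lambda$, $\tilde\lambda|_{\phi(E)}=0$, $\tilde\lambda'|_E=0$, $\tilde\lambda'|_{\phi(E)}=\lambda'$ (carried out after Proposition \ref{Fstr}) together with the compatibility condition $\lambda^2=\lambda'^2=-Id$, I get $\Phi^2=-Id$, so $\Phi$ is a $G$-compatible generalized almost complex structure. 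Commutativity of $\Phi$ and $\Psi$ follows because both are block-diagonal in $E\oplus\phi(E)$, and formula (\ref{taucuPsi}) records that the construction returns $\tau$.

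For the reverse direction, given a commuting pair $(\Phi,\Psi)$, I would \emph{define} $\tau := \tfrac12\Phi\circ(Id+\Psi)\circ\phi$ and verify the required properties. The key observation is to decompose $\mathbf{T}M = E\oplus E'$ into the $\pm 1$-eigenbundles of $\Psi$ and note that, by the $G$-compatibility relations (\ref{compatGPhi}), $\phi$ interchanges $E$ and $E'$ while $\Phi$ preserves each. Writing any $\mathcal{X} = \mathcal{X}_+ + \mathcal{X}_-$, one computes $(Id+\Psi)\phi\mathcal{X} = 2\phi\mathcal{X}_-$ and hence $\tau\mathcal{X} = \Phi\phi\mathcal{X}_-$, which immediately gives $\tau|_E=0$, $\tau|_{E'} = \Phi\phi|_{E'}:E'\xrightarrow{\sim}E$, and therefore $\tau^2=0$, $\mathrm{im}\,\tau=E$, $\mathrm{rank}\,\tau=m$.

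The remaining checks are short. For $g$-skewness, only the cross term $g(\tau\mathcal{X}_-,\mathcal{Y}_-)+g(\mathcal{X}_-,\tau\mathcal{Y}_-)$ is nonzero in principle, and collapsing it using $g$-skewness of $\Phi$, $g$-symmetry of $\phi$, and $\Phi\phi=\phi\Phi$ gives $0$. For compatibility in the sense of Definition \ref{deftauG}, for $\mathcal{X},\mathcal{Y}\in\phi(E)=E'$ one has $G(\tau\mathcal{X},\tau\mathcal{Y}) = G(\Phi\phi\mathcal{X},\Phi\phi\mathcal{Y}) = G(\phi\mathcal{X},\phi\mathcal{Y}) = G(\mathcal{X},\mathcal{Y})$, since $\Phi$ is a $G$-isometry and $\phi$ is as well ($G(\phi\mathcal{X},\phi\mathcal{Y}) = g(\mathcal{X},\phi\mathcal{Y}) = G(\mathcal{Y},\mathcal{X})$).

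Finally, that the two constructions are mutually inverse reduces to block computations on $E\oplus E'$. Starting from $(\Phi,\Psi)$, the $\tau$ built by (\ref{taucuPsi}) has $\mathrm{im}\,\tau=E$, so the associated paracomplex structure is $\Psi$ itself; and a direct evaluation of $\tau\phi+\phi\tau$ on $\mathcal{X}\in E$ yields $\Phi\mathcal{X}$ (via $\tau\phi\mathcal{X}=\Phi\phi^2\mathcal{X}=\Phi\mathcal{X}$) and on $\mathcal{X}\in E'$ yields $\phi\Phi\phi\mathcal{X}=\Phi\mathcal{X}$ using $\Phi\phi=\phi\Phi$, so we recover $\Phi$. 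Conversely, starting from $\tau$ and using $\Phi=\tau\phi+\phi\tau$, $\Psi=\Psi_E$, relation (\ref{taucuPsi}) recovers $\tau$. The main conceptual point, which is really the only obstacle, is verifying that the two $G$-compatibilities (of $\Phi$ commuting with $\phi$, of $\Psi$ anticommuting with $\phi$) are exactly what is needed to make $\phi$ swap the $\Psi$-eigenbundles while $\Phi$ preserves them—once that structural fact is in hand everything else is a one-line computation on each eigenbundle.
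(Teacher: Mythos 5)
Your proof is correct and follows essentially the same route as the paper: the same formula (\ref{taucuPsi}) for the inverse construction and the same eigenbundle decomposition, with the $G$-compatibility of $\tau$ checked directly from Definition \ref{deftauG} where the paper instead invokes condition 3) of Proposition \ref{equiGtau} via $\lambda=\Phi|_E$. One cosmetic correction: the fact that $\Phi$ preserves the $\Psi$-eigenbundles follows from the commuting hypothesis $\Phi\circ\Psi=\Psi\circ\Phi$, not from the $G$-compatibility of $\Phi$ (which gives $\Phi\circ\phi=\phi\circ\Phi$), though both hypotheses are of course available to you.
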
 \begin{proof} We have seen how to construct the pair
$(\Phi,\Psi)$ from $\tau$. Conversely, for a given pair $(\Phi,\Psi)$, let us
define $\tau\in End(\mathbf{T}M)$ by formula (\ref{taucuPsi}). Since $\phi$ and
$\Phi$ are isomorphisms, we see that $im\,\tau=im(Id+\Psi)$, which is the
$(+1)$-eigenbundle of $\Psi$ and has rank $m$. Thus, we will define this
subbundle as $E$ and, necessarily, the $(-1)$-eigenbundle of $\Psi$ will be
$\phi(E)$. It is easy to check that $\tau^2=0$ on both $E$ and $\phi(E)$. For this structure $\tau$, we have
$$\lambda=\tau\circ\phi=\frac{1}{2}\Phi\circ(Id+\Psi)|_E$$ and
the commutation between $\Phi$ and $\Psi$ yields $\lambda^2=-Id$. Thus, by 3) of Proposition \ref{equiGtau} $\tau$ is $G$-compatible.\end{proof}
\begin{prop}\label{EPhi} On a generalized Riemannian
manifold $(M,G)$, there exists a canonical bijection between the $G$-compatible,
generalized, almost tangent structures $\tau$ and the set of pairs $(E,\Phi)$
where $E$ is an almost Dirac structure and $\Phi$ is a $G$-compatible,
generalized, almost complex structure such that $\Phi(E)\subseteq E$.
Furthermore, the structure $\tau$ is integrable iff $E$ is integrable and $\Phi$
satisfies the following condition
\begin{equation}\label{integrcuPhi} [\Phi\mathcal{X},\phi\mathcal{Y}]+
[\phi\mathcal{X},\Phi\mathcal{Y}]+ \phi\Phi[\Phi\mathcal{X},\Phi\mathcal{Y}]\in
\Gamma E,\;\;\forall\mathcal{X},\mathcal{Y}\in\Gamma E.\end{equation}
\end{prop}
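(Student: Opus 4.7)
The plan is to reduce the bijection to the preceding Proposition \ref{thtaucuPsi} and to establish the integrability criterion by a direct computation of the Courant--Nijenhuis torsion $\mathcal{N}_\tau$ on a decomposition adapted to $E$.

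For the bijection, by Proposition \ref{thtaucuPsi} a $G$-compatible, generalized, almost tangent $\tau$ corresponds to a commuting pair $(\Phi,\Psi)$ of $G$-compatible generalized almost complex and almost paracomplex structures. The $+1$-eigenbundle of $\Psi$ is a maximal $g$-isotropic subbundle, hence an almost Dirac structure $E$, and the commutation $\Phi\Psi=\Psi\Phi$ is equivalent to $\Phi$ preserving the $\pm1$-eigenbundles of $\Psi$, giving $\Phi(E)\subseteq E$. Conversely, any almost Dirac $E$ determines $\Psi=\Psi_E$ with eigenbundles $E$ and $\phi(E)=E^{\perp_G}$; the hypothesis $\Phi(E)\subseteq E$ together with the $G$-isometry property of $\Phi$ forces $\Phi(\phi(E))\subseteq\phi(E)$, so $\Phi$ commutes with $\Psi_E$ and Proposition \ref{thtaucuPsi} returns $\tau$.

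For the integrability criterion I use the formula $\tau=\frac{1}{2}\Phi\circ(Id+\Psi)\circ\phi$ from (\ref{taucuPsi}). Combining $\phi\Psi=-\Psi\phi$ (from (\ref{compatGPhi}) with $\epsilon=+1$) and $\Phi\phi=\phi\Phi$ (from (\ref{compatGPhi}) with $\epsilon=-1$), this reduces to $\tau=\phi\Phi\circ\pi_{\phi(E)}$, so $\tau|_E=0$ and $\tau(\phi\mathcal{X})=\Phi\mathcal{X}$ for $\mathcal{X}\in\Gamma E$. Since $\mathcal{N}_\tau$ is tensorial and $\mathbf{T}M=E\oplus\phi(E)$, it suffices to check $\mathcal{N}_\tau$ on the three types of argument pairs with entries in $\Gamma E$ or $\phi(\Gamma E)$. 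For $(\mathcal{X},\mathcal{Y})\in\Gamma E\times\Gamma E$ the torsion vanishes automatically, as $\tau$ annihilates $E$ and $\tau^2=0$. For $(\mathcal{X},\phi\mathcal{Y})$ with $\mathcal{X},\mathcal{Y}\in\Gamma E$, the only surviving term is $-\tau[\mathcal{X},\Phi\mathcal{Y}]$; since $\ker\tau=E$ and $\Phi$ is an automorphism of $E$, its vanishing is equivalent to the integrability of $E$. For $(\phi\mathcal{X},\phi\mathcal{Y})$ with $\mathcal{X},\mathcal{Y}\in\Gamma E$ one obtains $[\Phi\mathcal{X},\Phi\mathcal{Y}]-\tau([\phi\mathcal{X},\Phi\mathcal{Y}]+[\Phi\mathcal{X},\phi\mathcal{Y}])$: the $\phi(E)$-component vanishes automatically once $E$ is integrable (since then $[\Phi\mathcal{X},\Phi\mathcal{Y}]\in\Gamma E$), while the $E$-component being zero, after applying $(\phi\Phi)^{-1}=-\phi\Phi$ to the identity $[\Phi\mathcal{X},\Phi\mathcal{Y}]=\phi\Phi\,\pi_{\phi(E)}([\phi\mathcal{X},\Phi\mathcal{Y}]+[\Phi\mathcal{X},\phi\mathcal{Y}])$, rewrites as $\pi_{\phi(E)}([\phi\mathcal{X},\Phi\mathcal{Y}]+[\Phi\mathcal{X},\phi\mathcal{Y}]+\phi\Phi[\Phi\mathcal{X},\Phi\mathcal{Y}])=0$, which is precisely condition (\ref{integrcuPhi}).

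The main obstacle is the last rearrangement: one must carefully track how $\phi\Phi$ interchanges $E$ and $\phi(E)$ and satisfies $(\phi\Phi)^2=-Id$ in order to convert the $E$-component vanishing of $\mathcal{N}_\tau(\phi\mathcal{X},\phi\mathcal{Y})$ into the stated $\phi(E)$-component condition on the expression in (\ref{integrcuPhi}). Once this identification is in place, the equivalence between $\mathcal{N}_\tau=0$ and the conjunction (integrability of $E$) + (\ref{integrcuPhi}) is immediate from the three-case analysis.
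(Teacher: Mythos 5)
Your proposal is correct and takes essentially the same approach as the paper: the bijection is reduced to Proposition \ref{thtaucuPsi} with $\Psi=\Psi_E$, and the integrability criterion comes from evaluating $\mathcal{N}_\tau$ case by case on $\mathbf{T}M=E\oplus\phi(E)$, the mixed case giving the integrability of $E$ and the $(\phi(E),\phi(E))$ case giving (\ref{integrcuPhi}). Your final rearrangement via $(\phi\Phi)^{-1}=-\phi\Phi$ is the same manipulation the paper performs by substituting $[\Phi\mathcal{X},\Phi\mathcal{Y}]=-\Phi^{2}[\Phi\mathcal{X},\Phi\mathcal{Y}]=-\tau\phi\Phi[\Phi\mathcal{X},\Phi\mathcal{Y}]$.
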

\begin{proof} The structure $\Phi$ associated with $\tau$ is (\ref{Phi2}) again. In the converse direction, take $\Psi=\Psi_E$ in Proposition \ref{thtaucuPsi}, alternatively, take
$$ \tau|_E=0,\;\tau|_{\phi(E)}=(\Phi\circ\phi)|_E.
$$ Furthermore, notice that if $E$ is integrable,
then, $ \mathcal{N}_\tau$ vanishes if at least one of the arguments is in
$\Gamma E$. Furthermore, since $\Phi|_E=\tau\circ\phi$, the remaining part of
the $\tau$-integrability condition reduces to
$$ [\Phi\mathcal{X},\Phi\mathcal{Y}]
-\tau([\Phi\mathcal{X},\phi\mathcal{Y}]+
[\phi\mathcal{X},\Phi\mathcal{Y}])=0,\;\;
\forall\mathcal{X},\mathcal{Y}\in\Gamma E.
$$
Since $E$ is $\Phi$-invariant, we may replace
$$[\Phi\mathcal{X},\Phi\mathcal{Y}]= -\Phi^2[\Phi\mathcal{X},\Phi\mathcal{Y}]= -\tau\phi\Phi[\Phi\mathcal{X},\Phi\mathcal{Y}]$$
and we get the integrability condition (\ref{integrcuPhi}).
\end{proof}

Now, in analogy to Section 3, we prove
\begin{prop}\label{taunabla} For any generalized 2-nilpotent structure $\tau$,
there exist big connections $\nabla$ that commute with $\tau$. \end{prop}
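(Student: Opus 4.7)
The plan is to follow the strategy of Proposition~\ref{existccompat}: construct $\nabla$ on small open sets $U$ using a frame adapted to $\tau$, and then assemble a global connection by means of a partition of unity.

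First, I would produce a local Witt-type decomposition adapted to $\tau$. On any sufficiently small open set $U$ one has a splitting
\[
\mathbf{T}M|_U = E\oplus\mathcal{R}\oplus\mathcal{Q},
\]
where $E=im\,\tau$, $\mathcal{R}$ is a complement of $E$ in $E^{\perp_g}$ on which $g$ is non-degenerate, and $\mathcal{Q}$ is a $g$-isotropic complement of $E$ in $\mathcal{R}^{\perp_g}$. Such a decomposition exists locally because the radical of $g|_{E^{\perp_g}}$ equals $E$, so that $g$ descends to a non-degenerate neutral form on $E^{\perp_g}/E$ admitting locally Lagrangian complements. The resulting frame has $E\perp_g \mathcal{R}$, $\mathcal{R}\perp_g\mathcal{Q}$, $g|_\mathcal{R}$ non-degenerate, $g$ pairs $E$ with $\mathcal{Q}$ non-degenerately, and both $E$ and $\mathcal{Q}$ are $g$-isotropic. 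Since $ker\,\tau=E^{\perp_g}$, the restriction $\tau|_\mathcal{Q}\colon\mathcal{Q}\to E$ is a bundle isomorphism.

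Next I would construct $\nabla|_U$. Commutation with $\tau$ forces $\nabla(\tau q)=\tau(\nabla q)$ for every $q\in\Gamma\mathcal{Q}$, which completely determines $\nabla|_E$ in terms of any prescribed $\nabla|_\mathcal{Q}$. Using the $g$-skew-symmetry of $\tau$, the $g$-compatibility identity applied to the pair $(\tau q,q')$ reduces to the condition that $\nabla|_\mathcal{Q}$ preserve the non-degenerate skew form
\[
\bar\omega(q,q')=g(\tau q,q')
\]
on $\mathcal{Q}$. I would therefore pick on $U$ any linear connection on $\mathcal{Q}$ preserving $\bar\omega$ (existence: trivialize $\bar\omega$ in a local symplectic frame of $\mathcal{Q}$ and use the trivial connection there), any $g|_\mathcal{R}$-metric connection on $\mathcal{R}$, and extend to $E$ by the rule $\nabla(\tau q)=\tau(\nabla q)$. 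The remaining $g$-compatibility checks, on the pairs $(e,e'),(e,r),(r,q),(q,q')$, follow immediately from the orthogonality relations and from the fact that $\nabla$ preserves each of the summands $E,\mathcal{R},\mathcal{Q}$.

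Finally, being a big connection that commutes with $\tau$ is an affine condition: the difference of two such connections is a $1$-form with values in the $g$-skew, $\tau$-commuting endomorphisms of $\mathbf{T}M$. Fixing a background big connection on $\mathbf{T}M$ (for instance $D\oplus D^*$ obtained from any linear connection $D$ on $TM$ and its dual on $T^*M$), the local connections $\nabla|_U$ can be blended by a partition of unity to yield the required global $\nabla$. The main obstacle is the first step: producing the adapted Witt decomposition and recognizing that commutation with $\tau$ becomes preservation of a symplectic form on the complement $\mathcal{Q}$; once this is done the rest is routine.
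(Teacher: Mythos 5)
Your proof is correct, and it reaches the goal by a noticeably different route than the paper. The paper first isolates the characterization that a big connection commutes with $\tau$ iff it preserves $E=im\,\tau$ and the induced connection on $E$ preserves the $2$-form $\omega_E$; it then builds $\nabla$ \emph{globally}, using the $G$-orthogonal splitting $\mathbf{T}M=(E\oplus\phi(E))\oplus S$ of (\ref{descortog}) supplied by the generalized Riemannian metric of Section 5: an $\omega_E$-preserving connection $\nabla'$ on $E$ (via the standard almost-symplectic correction of an arbitrary connection), a $g|_S$-metric connection on $S$, and the dual connection on $\phi(E)\cong E^*$. You instead work with a local Witt decomposition $E\oplus\mathcal{R}\oplus\mathcal{Q}$ built from $g$ alone, encode the commutation condition as preservation of the symplectic form $\bar\omega(q,q')=g(\tau q,q')$ on the isotropic complement $\mathcal{Q}$ (equivalent to the paper's $\omega_E$ on $E$ under the isomorphism $\tau|_{\mathcal{Q}}$), and glue by a partition of unity, which is legitimate since the conditions are affine in $\nabla$. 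What your version buys is independence from $G$: it proves the statement literally ``for any generalized 2-nilpotent structure,'' with no Riemannian data, at the cost of a local-to-global step. What the paper's version buys is a global one-shot construction and, more importantly, the explicit intermediate characterization (commutation $\Leftrightarrow$ preservation of $E$ and of $\omega_E$), which is reused in Proposition \ref{lambdators}; your argument contains this fact implicitly but does not state it in reusable form. Two small points to tidy: the pair $(r,r')$ should be added to your list of compatibility checks (it is what forces the $g|_{\mathcal{R}}$-metric choice on $\mathcal{R}$), and you should note explicitly that $\nabla(\tau q)=\tau(\nabla q)$ on $\mathcal{Q}$ already gives full commutation because $\nabla$ preserves $E\oplus\mathcal{R}=\ker\tau$.
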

\begin{proof} First, we notice that a big connection $\nabla$ commutes with
$\tau$ iff $\nabla$ preserves the subbundle $E=im\,\tau$ and the induced connection
$\nabla'$ of $E$ preserves the corresponding $2$-form $\omega_E$. The
preservation of $E$ obviously is a necessary condition for
$\nabla\tau=\tau\nabla$. Thus, $\nabla'$ exists and the
definition of $\omega_E$ shows that $$\nabla'_X\omega_E(e,\tau\mathcal{Y})=
\nabla_Xg(e,\mathcal{Y})=0,\;\;e\in\Gamma E,\mathcal{Y}\in\Gamma \mathbf{T}M.$$
Conversely, by subtracting $\nabla_Xg(e,\mathcal{Y})=0$ from $\nabla'_X\omega_E(e,\tau\mathcal{Y})=0$
we get $$g(e,\nabla_X\mathcal{Y})=
\omega_E(e,\tau\nabla_X\mathcal{Y})=\omega_E(e,\nabla_X\tau\mathcal{Y}),$$
therefore, $\nabla\tau\mathcal{Y}=\tau\nabla\mathcal{Y}$.

Now, in order to get the required big connection $\nabla$ we first construct an
$\omega_E$-preserving connection $\nabla'$ on $E$. $\nabla'$ is given by the known
formulas of almost symplectic geometry (e.g., \cite{Vcs}), for instance
$$\nabla'_Xe=\nabla^0_Xe+\Theta(X,e),\;\; \omega_E(\Theta(X,e),e')=\frac{1}{2}
\nabla^0_X\omega_E(e,e'),$$ where $\nabla^0$ is an arbitrary
connection on the vector bundle $E$ and $\Theta:\Gamma TM\times\Gamma E$ is a
``tensor". Then, we take a metric connection $\nabla^S$ on the pseudo-Euclidean
subbundle $(S,g|_S)$ of (\ref{descortog}). Finally, we define the connection
$\nabla''$ on $\phi(E)$ such that $\nabla'+\nabla''$ preserves $g|_{E\oplus\phi(E)}$ (in the identification of $\phi(E)$ with $E^*$, $\nabla''$ is $\nabla'$ acting on $E^*$). With these choices, $\nabla=\nabla'+\nabla^S+\nabla''$ is a big
connection that preserves $E$ and induces the $\omega_E$-preserving connection $\nabla'$ on $E$,
hence, $\nabla$ commutes with $\tau$.
\end{proof}

Furthermore, in analogy with Proposition \ref{integrcuconex}, we have
\begin{prop}\label{intorstg} If $\nabla$ is a big connection that
commutes with the integrable, generalized 2-nilpotent structure $\tau$, the
Gualtieri torsion of $\nabla$ satisfies the conditions
\begin{equation}\label{ttauintegr1} \mathcal{T}^\nabla( \mathcal{X},\mathcal{Y},\mathcal{Z})=0,
\end{equation}
if $\mathcal{X},\mathcal{Y},\mathcal{Z}\in\Gamma E^{\perp_g}$,
\begin{equation}\label{ttauintegr2} \mathcal{T}^\nabla( \tau\mathcal{X},\tau\mathcal{Y},\mathcal{Z})+
\mathcal{T}^\nabla( \tau\mathcal{X},\mathcal{Y},\tau\mathcal{Z})
\mathcal{T}^\nabla( \mathcal{X},\tau\mathcal{Y},\tau\mathcal{Z}),
\end{equation} if none of the arguments
$\mathcal{X},\mathcal{Y},\mathcal{Z}$ belongs to $E^{\perp_g}$. Conversely, if
there exists a big connection that commutes with $\tau$ and satisfies
(\ref{ttauintegr1}), (\ref{ttauintegr2}), $\tau$ is integrable.\end{prop}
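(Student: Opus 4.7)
The plan is to mirror the proof of Proposition \ref{integrcuconex}: derive a Nijenhuis--Gualtieri identity adapted to $\tau$, and then read off both directions from it. First I would observe that the derivation of (\ref{NijcutG}) in \cite{G2007}, invoked in Proposition \ref{integrcuconex}, uses only three features of $\Psi$: it is $g$-skew-symmetric, it commutes with $\nabla$, and it squares to $\epsilon\,\mathrm{Id}$. All three hold for $\tau$ with $\epsilon=0$, so the same calculation yields
$$g(\mathcal{N}_\tau(\mathcal{X},\mathcal{Y}),\mathcal{Z}) + \mathcal{T}^\nabla(\mathcal{X},\tau\mathcal{Y},\tau\mathcal{Z}) + \mathcal{T}^\nabla(\tau\mathcal{X},\mathcal{Y},\tau\mathcal{Z}) + \mathcal{T}^\nabla(\tau\mathcal{X},\tau\mathcal{Y},\mathcal{Z}) = 0 \qquad(\star)$$
for all $\mathcal{X},\mathcal{Y},\mathcal{Z}\in\Gamma\mathbf{T}M$, whenever $\nabla$ commutes with $\tau$.

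For the forward direction, assume $\mathcal{N}_\tau=0$. Then $(\star)$ reduces to the identity $\sum \mathcal{T}^\nabla(\ldots)=0$. Evaluated on arguments outside $E^{\perp_g}=\ker\tau$, this is exactly (\ref{ttauintegr2}). For (\ref{ttauintegr1}), placing any one argument, say $\mathcal{X}$, in $E^{\perp_g}$ kills two of the three Gualtieri-torsion terms and leaves $\mathcal{T}^\nabla(\mathcal{X},\tau\mathcal{Y},\tau\mathcal{Z})=0$; since $\tau\mathcal{Y},\tau\mathcal{Z}$ range over $E$, total skew-symmetry of $\mathcal{T}^\nabla$ gives the vanishing whenever at least two arguments lie in $E$. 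To promote this to all triples in $E^{\perp_g}$ I would use that $\nabla$ preserves both $E$ and $E^{\perp_g}$ (by $g$-metricity together with Proposition \ref{taunabla}) and that $g$ restricted to $E^{\perp_g}$ has $E$ as its radical; hence in a $\nabla$-compatible decomposition $E^{\perp_g}=E\oplus S$ the remaining ``purely-$S$" contributions are controlled via the $\omega_E$-preservation of the induced connection on $E$, reducing them to cases where at least two arguments may be taken in $E$.

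For the converse, assume $\nabla\tau=\tau\nabla$ and that (\ref{ttauintegr1}), (\ref{ttauintegr2}) hold. By non-degeneracy of $g$, proving $\mathcal{N}_\tau=0$ reduces to showing $g(\mathcal{N}_\tau(\mathcal{X},\mathcal{Y}),\mathcal{Z})=0$ for every triple, which by $(\star)$ amounts to the sum of the three $\mathcal{T}^\nabla$-terms vanishing. If no argument lies in $E^{\perp_g}$, this is precisely (\ref{ttauintegr2}). Otherwise, if at least one argument, say $\mathcal{X}$, lies in $E^{\perp_g}$, then $\tau\mathcal{X}=0$ kills two terms, and the surviving term $\mathcal{T}^\nabla(\mathcal{X},\tau\mathcal{Y},\tau\mathcal{Z})$ has all three arguments in $E^{\perp_g}$ (as $\tau\mathcal{Y},\tau\mathcal{Z}\in E\subseteq E^{\perp_g}$), so (\ref{ttauintegr1}) applies and the term vanishes.

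The main anticipated obstacle is the forward direction of (\ref{ttauintegr1}) in its full generality: the identity $(\star)$ directly produces $\mathcal{T}^\nabla=0$ only when two of the three arguments sit inside $E$, and the contributions from $S$-components in a splitting $E^{\perp_g}=E\oplus S$ must be eliminated by a separate analysis combining the $g$-metricity of $\nabla$ with the $\omega_E$-preserving character of the induced connection on $E$ supplied by Proposition \ref{taunabla}. In the Dirac case $E=E^{\perp_g}$ the complement $S$ is trivial and this difficulty disappears.
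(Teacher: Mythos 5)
Your proposal follows the paper's own proof in its essential mechanism: both rest on the identity (\ref{NijcutG}) specialized to $\epsilon=0$, the non-degeneracy of $g$, and the case analysis of the resulting three-term torsion identity according to whether arguments lie in $E^{\perp_g}=ker\,\tau$; your derivation of (\ref{ttauintegr2}) and your converse direction coincide with the paper's. The one place you part company with the paper is the forward direction of (\ref{ttauintegr1}), and there your instinct about the difficulty is sound but your proposed repair is not. The identity $(\star)$ with one argument in $ker\,\tau$ yields only $\mathcal{T}^\nabla(\mathcal{X},\tau\mathcal{Y},\tau\mathcal{Z})=0$, i.e., vanishing on triples in $E^{\perp_g}$ at least two of whose entries lie in $im\,\tau=E$; the paper passes over this and simply declares the three-term identity ``equivalent to the couple''. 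Your attempt to upgrade this to arbitrary triples in $E^{\perp_g}$ cannot work as written: the proposition assumes only a big ($g$-metric) connection, so $G$-metricity is not among the hypotheses, and in any case neither the $\nabla$-invariance of $E^{\perp_g}$ nor the $\omega_E$-preservation of the induced connection constrains $\mathcal{T}^\nabla$ on triples having two or more arguments with nontrivial components in a complement $S$ of $E$ inside $E^{\perp_g}$ --- the identity $(\star)$ is simply silent there, since every term contains a factor $\tau(\cdot)=0$. The honest resolution is the one you reach at the end: in the Dirac case $E=E^{\perp_g}$ the two statements coincide and there is nothing to patch, while in the general big-isotropic case (\ref{ttauintegr1}) should be read as asserting vanishing only on triples with at least two arguments in $im\,\tau$ --- which is also precisely what the converse direction uses. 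Drop the $\omega_E$/$G$-metricity paragraph; it adds an unprovable claim rather than closing the gap.
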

\begin{proof} Formula (\ref{NijcutG}) with $\epsilon=0$ shows that the torsion condition
$$\mathcal{T}^\nabla( \tau\mathcal{X},\tau\mathcal{Y},\mathcal{Z})+
\mathcal{T}^\nabla( \tau\mathcal{X},\mathcal{Y},\tau\mathcal{Z})
+\mathcal{T}^\nabla( \mathcal{X},\tau\mathcal{Y},\tau\mathcal{Z})=0$$ for
arbitrary arguments makes the assertions of the proposition hold. This torsion condition is equivalent to the couple
(\ref{ttauintegr1}), (\ref{ttauintegr2}) (recall that $E^{\perp_g}=ker\tau$).
\end{proof}
\begin{prop}\label{lambdators} Let $\tau$ be an integrable $G$-compatible 2-nilpotent structure on the generalized Riemannian manifold $(M,G)$. If $\nabla$ is a $G$-metric big connection that commutes with $\tau$, then, the $E$-connection induced by $\nabla$ on $E$ is the $E$-Levi-Civita connection and the structure $\tau$ is of the K\"ahler type.\end{prop}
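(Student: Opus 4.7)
The induced $E$-connection is $\nabla^E_{\mathcal{X}}\mathcal{Y} := \nabla_{pr_{TM}\mathcal{X}}\mathcal{Y}$ for $\mathcal{X},\mathcal{Y}\in\Gamma E$. The hypothesis $\nabla\circ\tau = \tau\circ\nabla$ forces $\nabla$ to preserve both $E = im\,\tau$ and $E^{\perp_g} = ker\,\tau$, so $\nabla^E$ is a well-defined $E$-connection on $E$; integrability of $E$ under the Courant bracket makes the latter the Lie-algebroid bracket, which restricts to $\Gamma E$. Restricting the $G$-compatibility of $\nabla$ to sections of $E$ yields that $\nabla^E$ is $G|_E$-metric, and combining $\nabla\tau = \tau\nabla$ with $\nabla\phi = \phi\nabla$ (the latter equivalent to $G$-metricity for a big connection) gives $\nabla^E\lambda = 0$, since $\lambda = \tau\circ\phi|_E$ by Proposition~\ref{equiGtau}.

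The crux is the vanishing of the $E$-torsion of $\nabla^E$. Since $\nabla$ preserves $E$ and $E$ is Courant-integrable, $T^{\nabla^E}(\mathcal{X},\mathcal{Y}) = T^{\nabla}(\mathcal{X},\mathcal{Y}) \in \Gamma E$, and by non-degeneracy of $G|_E$ it suffices to show $G(T^{\nabla^E}(\mathcal{X},\mathcal{Y}),\mathcal{Z}) = 0$ for $\mathcal{X},\mathcal{Y},\mathcal{Z}\in\Gamma E$. Because $E$ is $g$-isotropic and $\nabla$ preserves $E$, the connection terms in the Gualtieri formula~(\ref{Gualttord}) vanish when all three arguments lie in $\Gamma E$ (after the $\phi$-pairing of the third), yielding $\mathcal{T}^{\nabla}(\mathcal{X},\mathcal{Y},\phi\mathcal{Z}) = g(T^{\nabla}(\mathcal{X},\mathcal{Y}),\phi\mathcal{Z}) = G(T^{\nabla^E}(\mathcal{X},\mathcal{Y}),\mathcal{Z})$.

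To force this Gualtieri pairing to zero I would apply identity~(\ref{NijcutG}) with $\Psi = \tau$, $\epsilon = 0$, and $\mathcal{N}_\tau = 0$ to the triple $(\phi e_1,\phi e_2,\phi e_3)$; using $\tau\phi|_E = \lambda$ and the total skew-symmetry of $\mathcal{T}^{\nabla}$, one obtains a cyclic Bianchi identity for the tensor $\eta(e_1,e_2,e_3) := G(T^{\nabla^E}(\lambda e_1,\lambda e_2),e_3)$. A second Bianchi identity arises from $d_E\omega_E = 0$ (guaranteed by $\tau$-integrability via Proposition~\ref{slab-tare}) combined with $\nabla^E\omega_E = 0$, namely $\sum_{\mathrm{cyc}}\omega_E(T^{\nabla^E}(\mathcal{X},\mathcal{Y}),\mathcal{Z}) = 0$. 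Coupling these with the $G$-skew-symmetry of $\lambda$, the identity $\lambda^2 = -Id$ on $E$, and the fact that the difference $A := \nabla^E - D^E$ takes values in $\mathfrak{so}(G|_E)$ with $[\lambda,A_{\mathcal{X}}] = D^E_{\mathcal{X}}\lambda$, one extracts $T^{\nabla^E} = 0$.

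Once torsion-freeness is established, uniqueness of the Levi-Civita $E$-connection on the Riemannian Lie algebroid $(E,G|_E)$ gives $\nabla^E = D^E$; combined with $\nabla^E\lambda = 0$ this forces $D^E\lambda = 0$, and Proposition~\ref{propKN} then yields $\mathcal{N}_\lambda = 0$, so $(G,\tau)$ is of K\"ahler type. The principal obstacle is the third paragraph: both available Bianchi identities are rooted in the integrability of $\tau$ and become proportional after a $\lambda$-substitution of the arguments, so the delicate algebraic step is to combine them with the metric and $\lambda$-twisted skew conditions satisfied by $A$ in order to rule out non-trivial torsion.
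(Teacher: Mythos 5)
Your first, second, and fourth paragraphs track the paper's argument closely: the induced connection $\nabla^E_{e_1}e_2=\nabla_{pr_{TM}e_1}e_2$ preserves $G|_E$, $\omega_E$ and $\lambda=\tau\circ\phi|_E$ because $\nabla$ commutes with $\tau$ and $\phi$; the reduction $G(T^{\nabla^E}(\mathcal{X},\mathcal{Y}),\mathcal{Z})=\mathcal{T}^\nabla(\mathcal{X},\mathcal{Y},\phi\mathcal{Z})$ (the correction terms of (\ref{Gualttord}) drop out since $E$ is $g$-isotropic and $\nabla$-invariant) is exactly the right observation; and the endgame via uniqueness of the Levi-Civita $E$-connection and Proposition \ref{propKN} is the paper's. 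The genuine gap is your third paragraph, which is the heart of the proposition and which you do not actually prove: you propose to extract $T^{\nabla^E}=0$ by combining two cyclic, Bianchi-type identities --- one obtained from $\mathcal{N}_\tau=0$ via (\ref{NijcutG}) on the triple $(\phi e_1,\phi e_2,\phi e_3)$, one from $d_E\omega_E=0$ together with $\nabla^E\omega_E=0$ --- but you never carry out the combination, and you yourself label it ``the principal obstacle'' and ``the delicate algebraic step.'' A cyclic identity satisfied by a $3$-tensor that is skew only in its first two slots does not by itself force the tensor to vanish, so the conclusion hinges precisely on the algebra you leave open; as written, the proposal establishes metricity and $\nabla^E\lambda=0$, but not torsion-freeness, hence not the proposition.

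For comparison, the paper does not restart from (\ref{NijcutG}): it invokes the integrability conditions of Proposition \ref{intorstg} --- the relations (\ref{ttauintegr1})--(\ref{ttauintegr2}) satisfied by the Gualtieri torsion of any big connection commuting with the integrable $\tau$, equivalently the single identity $\mathcal{T}^\nabla(\tau\mathcal{X},\tau\mathcal{Y},\mathcal{Z})+\mathcal{T}^\nabla(\tau\mathcal{X},\mathcal{Y},\tau\mathcal{Z})+\mathcal{T}^\nabla(\mathcal{X},\tau\mathcal{Y},\tau\mathcal{Z})=0$ for arbitrary arguments --- and reads off $T^\nabla(e_1,e_2)=0$ from there, using exactly your remark that the Gualtieri torsion with two arguments in $\Gamma E$ reduces to $g(T^\nabla(e_1,e_2),\cdot)$. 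If you want to close the gap, route the argument through Proposition \ref{intorstg}, which already packages the consequence of $\mathcal{N}_\tau=0$ that you are re-deriving; even then, the passage from the resulting cyclic identity on $\eta(e_1,e_2,e_3)=G(T^\nabla(e_1,e_2),e_3)$ to its pointwise vanishing (via the substitutions by $\lambda$ and $\lambda^{-1}=-\lambda$ and the skew-symmetries you list) must be written out explicitly for the proof to stand.
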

\begin{proof} Let $\nabla'$ be the usual connection induced by $\nabla$ on $E$, which is known to preserve the $2$-form $\omega_E$. Under the hypotheses of the corollary, it also preserves the metric $G|_E$ and the complex structure $\lambda$. The integrability condition (\ref{ttauintegr1}) implies $T^\nabla(e_1,e_2)=0$. The induced $E$-connection is defined by $\nabla^E_{e_1}e_2=\nabla'_{pr_{TM}e_1}e_2$ and, by the previous remarks, it follows that $\nabla^E$ is torsionless and preserves $G|_E$ and $\lambda$, which means that $\nabla^E$ is the $E$-Levi-Civita connection and $\tau$ is of the K\"ahler type.\end{proof}

\end{document}